\renewcommand{\leq}{\leqslant}
\renewcommand{\geq}{\geqslant}
\renewcommand{\epsilon}{\varepsilon}
\newenvironment{oracle}[1][htb]{%
    \renewcommand{\ALG@name}{Oracle}
   \begin{algorithm}[#1]%
  }{\end{algorithm}}
\newcommand{\myitem}[1]{%
\item[#1]\protected@edef\@currentlabel{#1}%
}
\DeclareMathOperator*{\argmin}{\mathrm{arg\,min}}
\definecolor{crimson}{rgb}{0.86, 0.08, 0.24}
\definecolor{dodgerblue}{rgb}{0.12, 0.56, 1.0}
\definecolor{applegreen}{rgb}{0.55, 0.71, 0.0}
\definecolor{amethyst}{rgb}{0.6, 0.4, 0.8}
\definecolor{forestgreen}{rgb}{0.13, 0.55, 0.13}
\definecolor{burntorange}{rgb}{0.8, 0.33, 0.0}
\definecolor{deepcarrotorange}{rgb}{0.91, 0.41, 0.17}
\newtheorem{theorem}{Theorem}[section]
\newtheorem{lemma}[theorem]{Lemma}
\newtheorem{remark}[theorem]{Remark}
\newtheorem{ctr}[theorem]{Counterexample}
\newtheorem{definition}[theorem]{Definition}
\newtheorem{assumption}[theorem]{Assumption}
\begin{document}

\begin{center}
 {\Large The Iterates of the Frank-Wolfe Algorithm May Not Converge}
\end{center}

\vspace{7mm}

\noindent
\textbf{J\'er\^ome Bolte}\textsuperscript{ 1}\hfill\href{jerome.bolte@ut-capitole.fr}{\ttfamily jerome.bolte@ut-capitole.fr}\\
\\
\textbf{Cyrille W.\ Combettes}\textsuperscript{ 1}\hfill\href{mailto:cyrille.combettes@tse-fr.eu}{\ttfamily cyrille.combettes@tse-fr.eu}\\
\\
\textbf{\'Edouard Pauwels}\textsuperscript{ 2 3}\hfill\href{edouard.pauwels@irit.fr}{\ttfamily edouard.pauwels@irit.fr}\\

\noindent
{\small\textsuperscript{1 }\emph{Toulouse School of Economics, Universit\'e Toulouse 1 Capitole, Toulouse, France}}\\
{\small\textsuperscript{2 }\emph{Centre National de la Recherche Scientifique, France}}\\
{\small\textsuperscript{3 }\emph{Institut de Recherche en Informatique de Toulouse, Universit\'e Toulouse 3 Paul Sabatier, Toulouse, France}}

\vspace{7mm}

\begin{center}
\begin{minipage}{0.85\textwidth}
\begin{center}
 \textbf{Abstract}
\end{center}
\vspace{3mm}

{\small The Frank-Wolfe algorithm is a popular method for minimizing a smooth convex function $f$ over a compact convex set $\mathcal{C}$. While many convergence results have been derived in terms of function values, hardly nothing is known about the convergence behavior of the sequence of iterates $(x_t)_{t\in\mathbb{N}}$. Under the usual assumptions, we design several counterexamples to the convergence of $(x_t)_{t\in\mathbb{N}}$, where $f$ is $d$-time continuously differentiable, $d\geq2$, and $f(x_t)\to\min_\mathcal{C}f$. Our counterexamples cover the cases of open-loop, closed-loop, and line-search step-size strategies. 
We do not assume \emph{misspecification} of the linear minimization oracle and our results thus hold regardless of the points it returns,
demonstrating the fundamental pathologies in the convergence behavior of $(x_t)_{t\in\mathbb{N}}$.}
\end{minipage}
\end{center}

\vspace{2mm}

\section{Introduction}
\label{sec:intro}

The Frank-Wolfe algorithm \cite{fw56}, a.k.a.\ conditional gradient algorithm \cite{levitin66}, addresses the 
optimization problem
\begin{align*}
 \min_{x\in\mathcal{C}}f(x),
\end{align*}
where $\mathcal{C}\subset\mathbb{R}^n$ is a nonempty compact convex set, $f\colon\mathcal{D}\to\mathbb{R}$ is a convex function with Lipschitz-continuous gradient, and $\mathcal{D}\subset\mathbb{R}^n$ is an open convex set containing $\mathcal{C}$. It does not require projections onto $\mathcal{C}$ to ensure the feasibility of its iterates and uses linear minimizations over $\mathcal{C}$ instead, which can be significantly cheaper to compute \cite{combettes21}. 
Another advantage is that it may generate iterates that are sparse with respect to the vertices of $\mathcal{C}$ \cite{clarkson10}. Both properties have led to many applications of the Frank-Wolfe algorithm \cite{leblanc75,hazan08,lacoste13svm,joulin15video,luise19,combettes22,do21}.

With a suitable step-size strategy, the sequence of function values $(f(x_t))_{t\in\mathbb{N}}$ converges to $\min_\mathcal{C}f$ at a rate $\mathcal{O}(1/t)$ \cite{levitin66,dunn78,jaggi13fw}, and this result has been extensively complemented: lower bounds have been established \cite{canon68,jaggi13fw,lan13}, faster rates under additional assumptions have been derived \cite{levitin66,guelat86,garber15,kerdreux21}, and variants speeding up the algorithm have been developed \cite{lacoste15,garber16dicg,braun19bcg,combettes20}. However, it is not known whether the sequence of iterates $(x_t)_{t\in\mathbb{N}}$ converges or not, apart from the trivial case where the solution set $\argmin_\mathcal{C}f$ is a singleton \cite{levitin66,dunn78}. This is in contrast with other popular convex optimization methods such as the gradient descent, coordinate descent, mirror descent, proximal point, forward-backward, or Douglas-Rachford algorithm, for which convergence results for the sequence of iterates have been proved \cite{cp11} or disproved \cite{bp22}.

Note that counterexamples to the convergence of $(x_t)_{t\in\mathbb{N}}$ could be designed trivially by assuming that the linear minimization oracle is \emph{misspecified} (Definition~\ref{def:mis}), i.e., that it does not necessarily return the same output each time the same input is entered.
A simple illustration is given  by  the minimization of $f=0$ over $\mathcal{C}=\left[0,1\right]$ using an open-loop strategy~\ref{step:open1}--\ref{step:open2}. As misspecification allows to move towards $0$ or towards $1$ at each iteration, $(x_t)_{t\in\mathbb{N}}$ may follow any arbitrary trajectory in $\left[0,1\right]$. In $\mathbb{R}^2$, one can also easily build counterexamples where $(x_t)_{t\in\mathbb{N}}$ does not converge while remaining out of $\argmin_\mathcal{C}f$, for any step-size strategy~\ref{step:open1}--\ref{step:ls}: consider, e.g., minimizing $f=\operatorname{dist}(\cdot,\left[(-1/2,0),(1/2,0)\right])^2$ over $\mathcal{C}=\operatorname{conv}\{(-2,1/4),(-1,0),(1,0),(0,1)\}$.
These counterexamples are artificial and somehow vain, as they rely on the misspecification assumption and a very specific adversarial choice of the points returned by the oracle. Our work does not assume misspecification of the oracle
to demonstrate that $(x_t)_{t\in\mathbb{N}}$ may not converge and
relies instead on the joint geometry of $\mathcal{C}$ and $f$. 

\paragraph{Contributions.} We show that the sequence of iterates $(x_t)_{t\in\mathbb{N}}$ generated by the Frank-Wolfe algorithm does not converge in general. We design several instances of $\mathcal{C}$ and $f$ satisfying the usual assumptions, i.e., $\mathcal{C}$ is compact and convex and $f$ is convex with Lipschitz-continuous gradient, for which the sequence $(x_t)_{t\in\mathbb{N}}$ 
does not converge, while $f(x_t)\to\min_\mathcal{C}f$. We cover all step-size strategies for which convergence results for the function values have been established, i.e., the so-called open-loop, closed-loop, and line-search strategies. Furthermore, the functions $f$ designed are $d$-time continuously differentiable, where $d\geq2$ is arbitrarily large. 
We use key results from \cite{bp22} 
and do not assume \emph{misspecification} of the linear minimization oracle, thus demonstrating the fundamental pathologies in the convergence behavior of $(x_t)_{t\in\mathbb{N}}$. 

\section{Preliminaries}

\subsection{Notation and definitions}

We consider the Euclidean space $(\mathbb{R}^n,\langle\cdot,\cdot\rangle)$ equipped with the Euclidean norm $\|\cdot\|$.
Given a convex set $\mathcal{C}\subset\mathbb{R}^n$ and $x\in\mathcal{C}$, the tangent cone to $\mathcal{C}$ at $x$ is $\mathcal{T}_\mathcal{C}x=\operatorname{cl}\{\lambda(y-x)\mid\lambda>0,y\in\mathcal{C}\}$ and the normal cone to $\mathcal{C}$ at $x$ is $\mathcal{N}_\mathcal{C}x=\{u\in\mathbb{R}^n\mid\forall y\in\mathcal{C},\langle y-x,u\rangle\leq0\}$. 
The diameter and the set of vertices of $\mathcal{C}$ are denoted by $\operatorname{diam}\mathcal{C}$ and
$\operatorname{vert}\mathcal{C}$ respectively.
The Hausdorff distance between two nonempty compact sets $\mathcal{A},\mathcal{B}\subset\mathbb{R}^n$ is
denoted by $\operatorname{dist}(\mathcal{A},\mathcal{B})$.
The canonical vectors in $\mathbb{R}^2$ are $e_1=(1,0)$ and $e_2=(0,1)$. The unit sphere in $\mathbb{R}^2$ is $\mathbb{S}=\{x\in\mathbb{R}^2\mid\|x\|=1\}$. 
Given three points $A,B,C\in\mathbb{R}^2$, the angle at $B$ formed by the rays $[BA)$ and $[BC)$ is denoted by $\widehat{ABC}$.

\subsection{The Frank-Wolfe algorithm}
\label{sec:fw}

The Frank-Wolfe algorithm \cite{fw56} is presented in Algorithm~\ref{fw}. It assumes access to a linear minimization oracle (\texttt{LMO}, Oracle~\ref{lmo}) solving linear minimization problems over the constraint set $\mathcal{C}$.

\begin{oracle}[h]
\caption{\texttt{LMO}}
\label{lmo}
\begin{algorithmic}[1]
 \REQUIRE Vector $u\in\mathbb{R}^n$.
 \ENSURE Point $v\in\argmin_{v\in\mathcal{C}}\langle v,u\rangle$. 
\end{algorithmic}
\end{oracle}

The linear minimization oracle does not ensure a priori that the same output is returned each time the same input is entered. When $\argmin_{v\in\mathcal{C}}\langle v,u\rangle$ is not a singleton, it makes a choice on which element to return, based on, e.g., a proximity score, an ordering of vertices, or a random seed.
This leads to the notion of \emph{misspecification} for the oracle.

\begin{definition}
 \label{def:mis}
 The linear minimization oracle \emph{\texttt{LMO}} is specified if $u\in\mathbb{R}^n\mapsto\emph{\texttt{LMO}}(u)$ is a well-defined function and satisfies $\emph{\texttt{LMO}}(u)=\emph{\texttt{LMO}}(u')$ for all positively colinear $u,u'\in\mathbb{R}^n$. Otherwise, it is misspecified.
\end{definition}

\begin{algorithm}[h]
\caption{Frank-Wolfe}
\label{fw}
\begin{algorithmic}[1]
\REQUIRE Start point $x_0\in\mathcal{C}$, step-size strategy $(\gamma_t)_{t\in\mathbb{N}}\in\left[0,1\right]^\mathbb{N}$.
\ENSURE Sequence of iterates $(x_t)_{t\in\mathbb{N}}\in\mathcal{C}^\mathbb{N}$.
\FOR{$t=0$ \textbf{to} $T-1$}
\STATE$v_t\gets\texttt{LMO}(\nabla f(x_t))$\label{fw:lmo} 
\STATE$x_{t+1}\gets(1-\gamma_t)x_t+\gamma_tv_t$\label{fw:new}
\ENDFOR
\end{algorithmic}
\end{algorithm}

\begin{remark}
\begin{enumerate*}[label=(\roman*)]
 \item The step-size strategy $(\gamma_t)_{t\in\mathbb{N}}$ need not be numerically defined before running Algorithm~\ref{fw}. More precisely, we may say that a sequence $(x_t)_{t\in\mathbb{N}}\in(\mathbb{R}^n)^\mathbb{N}$ is generated by the Frank-Wolfe algorithm if $x_0\in\mathcal{C}$ and for all $t\in\mathbb{N}$, there exist $v_t\in\argmin_{v\in\mathcal{C}}\langle v,\nabla f(x_t)\rangle$ and $\gamma_t\in\left[0,1\right]$ such that $x_{t+1}=(1-\gamma_t)x_t+\gamma_tv_t$. 
\item When $\argmin_{v\in\mathcal{C}}\langle v,\nabla f(x_t)\rangle$ is not a singleton, the choice for $v_t$ is made by the linear minimization oracle. If the oracle is misspecified, then $(v_t)_{t\in\mathbb{N}}$, and thus $(x_t)_{t\in\mathbb{N}}$, may not be uniquely determined by $x_0$ and $(\gamma_t)_{t\in\mathbb{N}}$.
\end{enumerate*}
\end{remark}

Since $x_{t+1}$ is generated by convex combination of $x_t$ and $v_t$ (Line~\ref{fw:new}), it is feasible and there is no need for a projection step back onto $\mathcal{C}$. 
Note that $x_{t+1}=x_t+\gamma_t(v_t-x_t)$ is obtained by moving from $x_t$ towards $v_t$.
Furthermore, assuming the linear minimization oracle returns only vertices of $\mathcal{C}$, at most one new vertex is added to the convex decomposition of $x_t$ to obtain $x_{t+1}$, making the sequence $(x_t)_{t\in\mathbb{N}}$ sparse with respect to the vertices of $\mathcal{C}$.

There are mainly four step-size strategies that have been considered and for which the rate at which $f(x_t)\to\min_\mathcal{C}f$ has been established (Theorem~\ref{th:fw}). These are:
\begin{enumerate}[label=(\roman*)]
 \item\label{step:open1} an open-loop strategy
 \begin{align*}
  \forall t\in\mathbb{N},\,
  \gamma_t=\frac{1}{t+1};
 \end{align*}

 \myitem{(i')}\label{step:open2} another open-loop strategy
 \begin{align*}
  \forall t\in\mathbb{N},\,
  \gamma_t=\frac{2}{t+2};
 \end{align*}

 \item\label{step:closed} the closed-loop strategy
 \begin{align*}
  \forall t\in\mathbb{N},\,
  \gamma_t=\min\left\{\frac{\langle x_t-v_t,\nabla f(x_t)\rangle}{L\|x_t-v_t\|^2},1\right\};
 \end{align*}

 \item\label{step:ls} the line-search strategy
 \begin{align*}
  \forall t\in\mathbb{N},\,\gamma_t\in\argmin_{\gamma\in\left[0,1\right]}f(x_t+\gamma(v_t-x_t)).
 \end{align*}
\end{enumerate}

Assumption~\ref{aspt} collects the general set of assumptions for the Frank-Wolfe algorithm.

\begin{assumption}
 \label{aspt}
 Let $\mathcal{C}\subset\mathbb{R}^n$ be a nonempty compact convex set and $f\colon\mathcal{D}\to\mathbb{R}$ be a convex function with $L$-Lipschitz-continuous gradient, where $L>0$ and $\mathcal{D}\subset\mathbb{R}^n$ is an open convex set containing $\mathcal{C}$. 
\end{assumption}

\begin{theorem}[\cite{levitin66,dunn78,jaggi13fw}]
 \label{th:fw}
 Consider Assumption~\ref{aspt}.
 Then the Frank-Wolfe algorithm satisfies for all $t\geq1$,
 \begin{align*}
  f(x_t)-\min_\mathcal{C}f\leq
  \begin{cases}
   L(\operatorname{diam}\mathcal{C})^2(1+\ln t)/(2t)&\text{if }(\gamma_t)_{t\in\mathbb{N}}\text{ follows~\ref{step:open1}};\\
   2L(\operatorname{diam}\mathcal{C})^2/(t+2)&\text{if }(\gamma_t)_{t\in\mathbb{N}}\text{ follows~\ref{step:open2}};\\
   4L(\operatorname{diam}\mathcal{C})^2/(t+2)&\text{if }(\gamma_t)_{t\in\mathbb{N}}\text{ follows~\ref{step:closed}};\\
   4L(\operatorname{diam}\mathcal{C})^2/(t+2)&\text{if }(\gamma_t)_{t\in\mathbb{N}}\text{ follows~\ref{step:ls}}.
  \end{cases}
 \end{align*}
\end{theorem}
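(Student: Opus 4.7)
The plan is to exploit a single one-step inequality coming from the descent lemma and convexity, and then specialise it to each of the four step-size rules.

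\textbf{Step 1: the one-step recursion.} Set $D=\operatorname{diam}\mathcal{C}$, fix $x^\star\in\argmin_\mathcal{C}f$, and write $h_t=f(x_t)-\min_\mathcal{C}f$. The $L$-Lipschitz continuity of $\nabla f$ gives the classical descent lemma
\begin{align*}
f(x_{t+1})\leq f(x_t)+\gamma_t\langle\nabla f(x_t),v_t-x_t\rangle+\tfrac{L\gamma_t^2}{2}\|v_t-x_t\|^2.
\end{align*}
Since $v_t$ minimises $\langle\cdot,\nabla f(x_t)\rangle$ on $\mathcal{C}$, we have $\langle\nabla f(x_t),v_t-x_t\rangle\leq\langle\nabla f(x_t),x^\star-x_t\rangle$, and convexity of $f$ bounds the right-hand side by $\min_\mathcal{C}f-f(x_t)=-h_t$. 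Using $\|v_t-x_t\|\leq D$, I obtain the master inequality
\begin{align*}
h_{t+1}\leq (1-\gamma_t)h_t+\tfrac{L\gamma_t^2}{2}D^2.
\end{align*}

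\textbf{Step 2: open-loop cases \emph{(i)} and \emph{(i$'$)}.} For \ref{step:open2}, I proceed by induction on $t$, showing $h_t\leq 2LD^2/(t+2)$; plugging $\gamma_t=2/(t+2)$ into the master inequality and applying the elementary bound $(t+1)(t+3)\leq(t+2)^2$ closes the induction. For \ref{step:open1}, the same scheme with $\gamma_t=1/(t+1)$ produces $h_{t+1}\leq \frac{t}{t+1}h_t+\frac{LD^2}{2(t+1)^2}$; unrolling this recursion gives $h_t\leq\frac{LD^2}{2t}\sum_{s=1}^{t}\frac{1}{s}$, which is bounded by $\frac{LD^2(1+\ln t)}{2t}$ via the standard harmonic-sum estimate $\sum_{s=1}^t 1/s\leq 1+\ln t$.

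\textbf{Step 3: closed-loop and line-search cases.} Both strategies can be handled via the same master inequality, because each chooses $\gamma_t$ at least as well as the minimiser of the right-hand side. Specifically, for \ref{step:closed} the rule $\gamma_t=\min\{\langle x_t-v_t,\nabla f(x_t)\rangle/(L\|x_t-v_t\|^2),1\}$ minimises $\gamma\mapsto-\gamma\langle\nabla f(x_t),x_t-v_t\rangle+\frac{L\gamma^2}{2}\|v_t-x_t\|^2$ over $[0,1]$; for \ref{step:ls}, $f(x_{t+1})$ is minimised over the segment, hence is smaller than what \ref{step:closed} delivers. In either case, substituting $\gamma=2/(t+2)$ into the master inequality provides an upper bound on $h_{t+1}$, so the induction argument from Step 2 (with constant $4$ instead of $2$, to absorb the case $\gamma_t=1$ at the first step) yields $h_t\leq 4LD^2/(t+2)$.

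\textbf{Expected difficulty.} None of the individual steps is deep; the only mild care needed is (a) the harmonic-sum estimate for \ref{step:open1}, and (b) keeping track of the clipping at $1$ in \ref{step:closed} so the induction is well initialised, which is why the constant there is $4$ rather than $2$.
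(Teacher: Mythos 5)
The paper does not prove Theorem~\ref{th:fw} at all: it is imported verbatim from \cite{levitin66,dunn78,jaggi13fw} and used as a black box, so there is no in-paper argument to compare yours against. Your proof is the standard self-contained one and it is correct. The master inequality $h_{t+1}\leq(1-\gamma_t)h_t+\tfrac{1}{2}L\gamma_t^2D^2$ follows as you say from the descent lemma, the optimality of $v_t$ in the linear subproblem, and convexity; the induction for~\ref{step:open2} closes via $(t+1)(t+3)\leq(t+2)^2$ with the base case handled by $\gamma_0=1$; the unrolled recursion $th_t\leq\tfrac{1}{2}LD^2\sum_{s=1}^{t}s^{-1}$ for~\ref{step:open1} together with $\sum_{s=1}^{t}s^{-1}\leq1+\ln t$ gives exactly the stated logarithmic bound; and the reduction of~\ref{step:closed} and~\ref{step:ls} to the~\ref{step:open2} recursion is sound, since the closed-loop rule is precisely the minimizer of the quadratic model over $\left[0,1\right]$ and exact line search does at least as well as that. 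One small remark: your argument in fact yields the constant $2$ rather than $4$ for the last two cases, because comparing the quadratic model at its minimizer against its value at $\gamma=2/(t+2)$ already covers $t=0$ (where $2/(t+2)=1$ coincides with the possible clipping), so the extra factor you introduce ``to absorb the case $\gamma_t=1$'' is unnecessary; the factor $4$ in the statement is simply the looser constant carried over from the cited references, and proving the sharper bound a fortiori establishes the theorem as stated.
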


\section{The Frank-Wolfe algorithm may not converge}
\label{sec:ctr}

We turn on to the main results of the paper. We 
design several instances of $\mathcal{C}$ and $f$ satisfying Assumption~\ref{aspt} and for which the sequence of iterates $(x_t)_{t\in\mathbb{N}}$ generated by the Frank-Wolfe algorithm using a usual step-size strategy~\ref{step:open1}--\ref{step:ls} does not converge, while Theorem~\ref{th:fw} still holds. The strength of these counterexamples is to be valid against specified linear minimization oracles, demonstrating that nonconvergence is intrinsic to the Frank-Wolfe algorithm in the smooth convex optimization setting. 
We will use a key result from \cite{bp22} which we restate in Theorem~\ref{th:counter}. It shows that we can design a convex function, arbitrarily smooth, from polygonal sketches, and that we can further choose its gradient directions at specific points from a set of directions described by the geometry of the sketches.

Given a polytope $\mathcal{P}\subset\mathbb{R}^2$ and a vertex $V\in\operatorname{vert}\mathcal{P}$, we denote by $\displaystyle \mathcal{K}_\mathcal{P}V=\mathcal{N}_\mathcal{P}V\cap\left(-\mathcal{T}_\mathcal{P}V\right)$ the cone of admissible directions at $V$ for $\mathcal{P}$.

\begin{theorem}[\cite{bp22}]
 \label{th:counter} 
 In $\mathbb{R}^2$, let $(\mathcal{P}_\ell)_{\ell\in\mathbb{N}}$ be a sequence of polytopes, $(u_k)_{k\in\mathbb{N}}\in\mathbb{S}^\mathbb{N}$ be a sequence of unit vectors, $(V_k)_{k\in\mathbb{N}}\in\left(\bigcup_{\ell\in\mathbb{N}}\operatorname{vert}\mathcal{P}_\ell\right)^\mathbb{N}$ be a sequence of vertices of the polytopes, and $(\delta_\ell)_{\ell\in\mathbb{N}}\in\left]0,1\right[^\mathbb{N}$ be such that 
 \begin{enumerate}[label=(\roman*)]
  \item for all $k\in\mathbb{N}$, there exists $\ell\in\mathbb{N}$ such that $u_k\in\mathcal{K}_{\mathcal{P}_\ell}V_k$; 
  \item for all $\ell\in\mathbb{N}$ and $\lambda\in\left[1-\delta_\ell,1+\delta_\ell\right]$, $\lambda\mathcal{P}_{\ell+1}\subset\operatorname{int}\mathcal{P}_\ell$;
  \item for all $\ell\in\mathbb{N}$, $0\in\operatorname{int}\mathcal{P}_\ell$.
 \end{enumerate}
 Then for any open convex set $\mathcal{D}\subset\mathbb{R}^2$ and $d\in\mathbb{N}$ such that $\mathcal{P}_0\subset\mathcal{D}$ and $d\geq2$ respectively, there exist a $d$-time continuously differentiable convex function $f\colon\mathcal{D}\to\mathbb{R}$ and a sequence $(\eta_\ell)_{\ell\in\mathbb{N}}\in\mathbb{R}^\mathbb{N}$ such that for all $k,\ell\in\mathbb{N}$, 
 \begin{enumerate}[label=(\roman*)]
  \setcounter{enumi}{3}
  \item $\eta_{\ell+1}<\eta_\ell$;
  \item $\mathcal{P}_\ell\subset\{x\in\mathbb{R}^2\mid f(x)\leq\eta_\ell\}$;
  \item $f(V)=\eta_\ell$ for all $V\in\operatorname{vert}\mathcal{P}_\ell$;
  \item $\operatorname{dist}(\mathcal{P}_\ell,\{x\in\mathbb{R}^2\mid f(x)\leq\eta_\ell\})\leq\delta_\ell$;
  \item\label{curv} $\{x\in\mathbb{R}^2\mid f(x)\leq\eta_\ell\}$ has positive curvature;
  \item\label{coli} $\nabla f(V_k)$ is positively colinear to $u_k$;
  \item $\nabla^2f$ is positive definite outside of $\argmin_\mathcal{D}f=\bigcap_{\ell\in\mathbb{N}}\mathcal{P}_\ell$.
 \end{enumerate}
\end{theorem}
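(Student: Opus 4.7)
The plan is to build $f$ through its sublevel sets: first approximate each polytope $\mathcal{P}_\ell$ by a $C^d$ strongly convex body $\mathcal{Q}_\ell$, then stitch these bodies together into a single $C^d$ convex function whose $\eta_\ell$-sublevel sets are precisely the $\mathcal{Q}_\ell$.

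For the first phase, for each $\ell \in \mathbb{N}$ I would construct a $C^d$ strongly convex body $\mathcal{Q}_\ell$ with $\mathcal{P}_\ell \subset \mathcal{Q}_\ell$, $\operatorname{vert}\mathcal{P}_\ell \subset \partial\mathcal{Q}_\ell$, $\operatorname{dist}(\mathcal{P}_\ell,\mathcal{Q}_\ell)\le\delta_\ell$, and such that the outer unit normal to $\mathcal{Q}_\ell$ at every vertex $V_k\in\operatorname{vert}\mathcal{P}_\ell$ equals the prescribed direction $u_k$. The admissibility hypothesis $u_k\in\mathcal{K}_{\mathcal{P}_\ell}V_k=\mathcal{N}_{\mathcal{P}_\ell}V_k\cap(-\mathcal{T}_{\mathcal{P}_\ell}V_k)$ is exactly what ensures that a short $C^d$ strongly convex arc through $V_k$ with outer normal $u_k$ keeps $\mathcal{P}_\ell$ on its interior side. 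Rounding each corner of $\mathcal{P}_\ell$ with such an arc and bridging across edges with strongly convex splines yields $\mathcal{Q}_\ell$; choosing the smoothing scale smaller than both $\delta_\ell$ and the scaling gap given by condition (ii) enforces the Hausdorff bound as well as the strict nesting $\mathcal{Q}_{\ell+1}\subset\operatorname{int}\mathcal{Q}_\ell$.

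For the second phase, fix a strictly decreasing sequence $(\eta_\ell)$ with limit $\eta_\infty$ and set $f\equiv\eta_\infty$ on $\bigcap_\ell\mathcal{P}_\ell$. On each annular region $\mathcal{Q}_\ell\setminus\operatorname{int}\mathcal{Q}_{\ell+1}$, define $f$ by a $C^d$ strongly convex interpolant taking value $\eta_\ell$ on $\partial\mathcal{Q}_\ell$ and $\eta_{\ell+1}$ on $\partial\mathcal{Q}_{\ell+1}$. A natural choice is to use the Minkowski gauges of $\mathcal{Q}_\ell$ and $\mathcal{Q}_{\ell+1}$ centered at $0$ (using $0\in\operatorname{int}\mathcal{P}_\ell$ from condition (iii)) and combine them via a bump function in a radial depth variable whose derivatives up to order $d$ vanish on both boundaries.

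The main obstacle is ensuring that $f$ is globally $C^d$ and has positive definite Hessian outside $\bigcap_\ell\mathcal{P}_\ell$. Global $C^d$-regularity amounts to matching $d$-jets across each $\partial\mathcal{Q}_\ell$, which follows from the vanishing boundary conditions on the interpolation. Strict convexity requires that the gaps $\eta_\ell-\eta_{\ell+1}$ be small compared to the minimum curvature of $\mathcal{Q}_\ell$ so that the radial interpolation does not destroy positive definiteness; the key technical ingredient is a one-dimensional convex-interpolation lemma in the radial variable. Once $f$ is assembled, properties (iv)--(viii) hold by construction, (ix) follows because $\nabla f(V_k)$ is a positive multiple of the outer normal to $\partial\mathcal{Q}_\ell$ at $V_k$ (strong convexity forces the gradient to be nonzero, and that normal direction is $u_k$ by design), and (x) follows from the strong convexity of every annular piece together with the fact that the complement of $\bigcap_\ell\mathcal{P}_\ell$ is exactly the union of these annuli.
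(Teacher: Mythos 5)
The paper itself does not prove Theorem~\ref{th:counter}: it imports it from \cite{bp22}, and the only structural information it later uses is revealed in the proof of Lemma~\ref{lem:align}, namely that the intermediate level sets of $f$ are \emph{Minkowski combinations} $\alpha\mathcal{L}_1+\beta\mathcal{L}_2$ of two prescribed level sets, with an affine inverse Gauss map $n^{-1}(u)=\alpha n_1^{-1}(u)+\beta n_2^{-1}(u)$. Your first phase (rounding each $\mathcal{P}_\ell$ into a $C^d$ strongly convex body $\mathcal{Q}_\ell$ with prescribed outer normals $u_k$ at the $V_k$, using the admissibility cones, with smoothing scale below $\delta_\ell$ and below the nesting gap from hypothesis (ii)) is consistent with that construction and is essentially right. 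The gap is in your second phase, the gluing.

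Your stated matching condition --- an interpolant ``whose derivatives up to order $d$ vanish on both boundaries'' --- is inconsistent both with convexity and with the conclusions you are trying to prove. If $\nabla f=0$ at any point of $\partial\mathcal{Q}_\ell$, convexity forces that point to be a global minimizer of $f$, contradicting $f=\eta_{\ell+1}<\eta_\ell$ on $\partial\mathcal{Q}_{\ell+1}$; it also contradicts conclusion~\ref{coli} (which requires $\nabla f(V_k)$ to be a \emph{nonzero} positive multiple of $u_k$, as the paper's use of it in Lemma~\ref{cor:ls} makes clear) and the positive definiteness of $\nabla^2 f$ outside the solution set. The actual technical heart of the construction is the opposite requirement: the normal derivative of $f$ must be nonzero and matched to order $d$ across each interface $\partial\mathcal{Q}_\ell$, and these slopes must decay at a controlled rate as $\ell\to\infty$ so that $f$ remains $C^d$ on the boundary of the accumulation set $\bigcap_\ell\mathcal{P}_\ell$; this couples the choice of $(\eta_\ell)$ to the widths of the annuli and cannot be made ``upfront'' as you do. A second unaddressed point is convexity itself: prescribing convex sublevel sets only yields quasiconvexity, and one must show the sublevel sets vary concavely with the level --- this is exactly what the support-function (Minkowski-sum) interpolation of \cite{bp22} delivers and what a gauge/radial interpolation does not obviously provide. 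Finally, even if a radial interpolation could be repaired to produce some valid $f$, it would not satisfy the Minkowski-combination identity that the paper's Lemma~\ref{lem:align} relies on, so it would not serve as a proof of the theorem \emph{as used} in this paper.
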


\begin{remark}
 In Theorem~\ref{th:counter}, $\nabla f$ is Lipschitz-continuous on $\mathcal{D}$ by the mean value theorem, since $d\geq2$. Thus, $f$ satisfies Assumption~\ref{aspt}.
\end{remark}

\newpage
\begin{lemma}
 \label{cor:ls}
 Consider the function $f$ and all other variables defined in Theorem~\ref{th:counter}. Let $\mathcal{C}\subset\mathcal{D}$ be a compact convex set and consider minimizing $f$ over $\mathcal{C}$ using the Frank-Wolfe algorithm with the line-search strategy~\ref{step:ls}. 
 Let $t\in\mathbb{N}$ and suppose that there exists $k\in\mathbb{N}$ such that
 $V_k\in\left[x_t,v_t\right]$ and $\langle v_t-x_t,u_k\rangle=0$. Then $x_{t+1}=V_k$.
\end{lemma}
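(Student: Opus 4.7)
The plan is to examine the one-variable line-search objective $\phi\colon\gamma\mapsto f(x_t+\gamma(v_t-x_t))$ on $[0,1]$ and show that its \emph{unique} minimizer is the parameter $\gamma^\ast\in[0,1]$ with $V_k=x_t+\gamma^\ast(v_t-x_t)$. The degenerate case $v_t=x_t$ forces $V_k=x_t=x_{t+1}$ trivially, so I would focus on $v_t\neq x_t$, under which $\gamma^\ast$ is well defined.

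The first-order computation is short: $\phi$ is differentiable on $[0,1]$ with $\phi'(\gamma^\ast)=\langle\nabla f(V_k),v_t-x_t\rangle$. By item~(ix) of Theorem~\ref{th:counter}, $\nabla f(V_k)$ is positively colinear to $u_k$, and the hypothesis $\langle u_k,v_t-x_t\rangle=0$ then forces $\phi'(\gamma^\ast)=0$. Convexity of $\phi$ (inherited from $f$) makes $\gamma^\ast$ a minimizer of $\phi$ over $[0,1]$.

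The main obstacle is \emph{uniqueness} of that minimizer, needed because the line-search rule only picks some $\gamma_t\in\argmin_{[0,1]}\phi$ and one must rule out flat pieces of $\phi$. To handle it, I would first establish $V_k\notin\argmin_\mathcal{D}f$: since $V_k\in\operatorname{vert}\mathcal{P}_{\ell_k}$ for some $\ell_k$, items~(vi), (v), and (iv) of Theorem~\ref{th:counter} applied to any vertex of the nonempty polytope $\mathcal{P}_{\ell_k+1}$ yield $f(V_k)=\eta_{\ell_k}>\eta_{\ell_k+1}\geq\min_\mathcal{D}f$. Item~(x) then gives $\nabla^2f(V_k)\succ0$, so $\phi''(\gamma^\ast)=(v_t-x_t)^{\!\top}\nabla^2f(V_k)(v_t-x_t)>0$. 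Combining global convexity of $\phi$ with local strict convexity at $\gamma^\ast$ rules out any second minimizer: convexity of the minimizer set of a convex function would otherwise force $\phi$ to be affine on a neighborhood of $\gamma^\ast$, contradicting $\phi''(\gamma^\ast)>0$. Hence $\argmin_{[0,1]}\phi=\{\gamma^\ast\}$, the line-search imposes $\gamma_t=\gamma^\ast$, and $x_{t+1}=V_k$.
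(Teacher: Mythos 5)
Your proof is correct, and the first-order part is exactly the paper's: both of you observe that the line-search optimality condition and the hypothesis $\langle v_t-x_t,u_k\rangle=0$ together with item~(ix) of Theorem~\ref{th:counter} make both $x_{t+1}$ and $V_k$ critical points of the convex restriction $\phi$ on $[x_t,v_t]$. Where you diverge is the uniqueness step. The paper invokes item~\ref{curv}: if the two minimizers were distinct, $\phi$ would be constant on a nondegenerate subsegment, so the boundary of the sublevel set $\{x\mid f(x)\leq f(V_k)\}$ would contain a straight piece, contradicting its positive curvature. You instead invoke item~(x): after checking via items~(iv)--(vi) that $f(V_k)=\eta_{\ell_k}>\eta_{\ell_k+1}\geq\min_\mathcal{D}f$, so $V_k\notin\argmin_\mathcal{D}f$, you get $\nabla^2f(V_k)\succ0$, hence $\phi''(\gamma^\ast)>0$, which is incompatible with $\phi$ being constant on an interval containing $\gamma^\ast$. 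The two routes are close cousins (a positive-definite Hessian off the solution set is what produces the positively curved level sets in the first place), but yours is the more elementary and self-contained of the two: it reduces everything to one-dimensional strict convexity and makes explicit the check that $V_k$ is not a minimizer, a point the paper's one-line appeal to positive curvature leaves implicit. The cost is the extra verification via (iv)--(vi); the benefit is that you never need to reason about the geometry of level-set boundaries. Your handling of the degenerate case $v_t=x_t$ is also a welcome explicit touch that the paper omits.
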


\begin{proof}
 We have $x_{t+1}=(1-\gamma_t)x_t+\gamma_tv_t$ where $\gamma_t\in\argmin_{\gamma\in\left[0,1\right]}f(x_t+\gamma(v_t-x_t))$, i.e., $x_{t+1}\in\left[x_t,v_t\right]$ and $\langle v_t-x_t,\nabla f(x_{t+1})\rangle=0$. By Theorem~\ref{th:counter}\ref{coli}, $u_k$ and $\nabla f(V_k)$ are positively colinear so $\langle v_t-x_t,\nabla f(V_k)\rangle=0$. By Theorem~\ref{th:counter}\ref{curv}, $\{x\in\mathbb{R}^2\mid f(x)\leq f(V_k)\}$ has positive curvature, which yields $x_{t+1}=V_k$.
\end{proof}

\begin{lemma}
 \label{lem:align}
 Consider the function $f$ and all other variables defined in Theorem~\ref{th:counter}. Let $k_1,k_2,\ell_1,\ell_2\in\mathbb{N}$, $\ell_1>\ell_2$, and $u\in\mathbb{S}$, and suppose that $V_{k_1}\in\mathcal{P}_{\ell_1}$ and $V_{k_2}\in\mathcal{P}_{\ell_2}$ are such that $0$, $V_{k_1}$, $V_{k_2}$ are aligned and $u_{k_1}=u_{k_2}=u$. Then $\nabla f(x)$ is positively colinear to $u$ for all $x\in\left[V_{k_1},V_{k_2}\right]$.
\end{lemma}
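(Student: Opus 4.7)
The plan is to parametrize the segment $[V_{k_1}, V_{k_2}]$ as $x(s) = V_{k_1} + s(V_{k_2} - V_{k_1})$ for $s \in [0,1]$, then establish the conclusion by first showing the perpendicular component of $\nabla f$ vanishes throughout and next deducing positivity of the $u$-component. By Theorem~\ref{th:counter}\ref{coli}, the conclusion already holds at $s = 0$ and $s = 1$, with $\nabla f(V_{k_1}) = c_1 u$ and $\nabla f(V_{k_2}) = c_2 u$ for some $c_1, c_2 > 0$. Iterating Theorem~\ref{th:counter}(ii) from index $\ell_2$ up to $\ell_1$ yields $\mathcal{P}_{\ell_1} \subset \operatorname{int}\mathcal{P}_{\ell_2}$; together with items (iv)--(vi) this gives $f(V_{k_1}) = \eta_{\ell_1} < \eta_{\ell_2} = f(V_{k_2})$. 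Since the vertices $V_{k_1}, V_{k_2}$ and the open segment between them lie outside $\argmin_\mathcal{D} f = \bigcap_\ell \mathcal{P}_\ell$, item~(x) makes $\phi(s) = f(x(s))$ strictly convex on $[0,1]$, and combined with $\phi(0) < \phi(1)$ it is strictly increasing. In particular $\phi'(0) = c_1 \langle u, V_{k_2} - V_{k_1}\rangle > 0$, so $\langle u, V_{k_2} - V_{k_1}\rangle > 0$.

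The core step is to establish that $E(s) := \langle \nabla f(x(s)), u^\perp\rangle \equiv 0$ on $[0,1]$, where $u^\perp$ is any unit vector perpendicular to $u$. Theorem~\ref{th:counter}\ref{curv} makes each sublevel set $\Omega_c = \{y \in \mathbb{R}^2 \mid f(y) \leq c\}$ a strictly convex body with positive curvature, so for every admissible $c$ the point $p(c) = \argmax_{y \in \Omega_c} \langle u, y\rangle$ is unique and is characterized by $\nabla f(p(c))$ being positively colinear to $u$. Consequently $p(\eta_{\ell_1}) = V_{k_1}$, $p(\eta_{\ell_2}) = V_{k_2}$, and $c \mapsto p(c)$ is continuous. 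The lemma reduces to showing that $p(c) \in [V_{k_1}, V_{k_2}]$ for every $c \in [\eta_{\ell_1}, \eta_{\ell_2}]$, i.e., identifying the image of $p$ with the straight segment bounded by the two vertices; the alignment hypothesis that $0, V_{k_1}, V_{k_2}$ lie on a common line $L$ is exactly what pins down this locus.

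The principal difficulty is this identification of $p([\eta_{\ell_1}, \eta_{\ell_2}])$ with $[V_{k_1}, V_{k_2}]$. The idea is that the construction of $f$ from the polygonal sketches in \cite{bp22} imposes a structural rigidity on the inverse Gauss map: whenever two vertices on a common ray from the origin are assigned the same admissible normal direction $u$, the preimage of the ray $\{tu \mid t > 0\}$ under $\nabla f$ is forced to lie on that same line, so the point with outer normal $u$ on each intermediate level set is exactly the crossing of $L$ with the level set. Once $E \equiv 0$ is established, positivity of $\langle \nabla f(x(s)), u\rangle$ is immediate from $\phi'(s) = \langle \nabla f(x(s)), u\rangle \cdot \langle u, V_{k_2} - V_{k_1}\rangle > 0$ combined with $\langle u, V_{k_2} - V_{k_1}\rangle > 0$, concluding the proof.
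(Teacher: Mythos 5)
Your setup is sound and you have correctly identified the right reduction: since each sublevel set has positive curvature (Theorem~\ref{th:counter}\ref{curv}), the point of $\operatorname{bd}\{f\leq c\}$ whose outer normal is $u$ is unique, and the lemma amounts to showing that for every intermediate level this point lies on the segment $\left[V_{k_1},V_{k_2}\right]$. But at exactly this point — which you yourself call ``the principal difficulty'' — your argument stops being a proof. The sentence ``the construction of $f$ from the polygonal sketches imposes a structural rigidity on the inverse Gauss map \dots the preimage of the ray $\{tu\mid t>0\}$ under $\nabla f$ is forced to lie on that same line'' is an assertion of the conclusion, not a derivation of it. Nothing you have written explains \emph{why} the construction forces this, nor where the hypothesis that $0$, $V_{k_1}$, $V_{k_2}$ are aligned actually enters. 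The mechanism the paper uses is concrete: by the construction in \cite{bp22}, the intermediate sublevel set $\mathcal{L}=\{f\leq f(x')\}$ is a Minkowski combination $\alpha\mathcal{L}_1+\beta\mathcal{L}_2$ of the two endpoint sublevel sets with $\alpha,\beta\geq0$, $\alpha+\beta>0$; the inverse Gauss map is additive under Minkowski combinations (\cite[Lem.~2]{bp22}), so $n^{-1}(u)=\alpha n_1^{-1}(u)+\beta n_2^{-1}(u)=\alpha V_{k_1}+\beta V_{k_2}$; and the alignment with $0$ is precisely what guarantees that $\alpha V_{k_1}+\beta V_{k_2}=x'$ for the same coefficients $\alpha,\beta$ that produce the level set. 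Without invoking the Minkowski-sum structure of the level sets and the linearity of the inverse Gauss map, the ``rigidity'' you appeal to has no basis, and the proof is incomplete.

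A secondary, fixable flaw: you deduce $\langle u,V_{k_2}-V_{k_1}\rangle>0$ by claiming that strict convexity of $\phi$ together with $\phi(0)<\phi(1)$ makes $\phi$ strictly increasing, hence $\phi'(0)>0$. That implication is false (a strictly convex function can decrease initially and still end higher than it started). The inequality itself is true, but should be obtained directly: $V_{k_2}=\mu V_{k_1}$ with $\mu>1$ by the alignment and nesting, and $\langle u,V_{k_1}\rangle>0$ because $u$ lies in the normal cone to a polytope containing $0$ in its interior at the vertex $V_{k_1}$.
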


\begin{proof}
 Let $\mathcal{L}_1=\{x\in\mathbb{R}^2\mid f(x)\leq f(V_{k_1})\}$ and $\mathcal{L}_2=\{x\in\mathbb{R}^2\mid f(x)\leq f(V_{k_2})\}$. By Theorem~\ref{th:counter}\ref{curv}, they have positive curvature. 
 Let $n_1\colon\operatorname{bd}\mathcal{L}_1\to\mathbb{S}$ and $n_2\colon\operatorname{bd}\mathcal{L}_2\to\mathbb{S}$ be their Gauss maps respectively, which are diffeomorphisms that map a point on the boundary of the set to the normal to the set at that point \cite[Sec.~2.5]{schneider13}. By Theorem~\ref{th:counter}\ref{coli}, $n_1(V_{k_1})=u=n_2(V_{k_2})$.
 Let $x'\in\left[V_{k_1},V_{k_2}\right]$ and $\mathcal{L}=\{x\in\mathbb{R}^2\mid f(x)\leq f(x')\}$. 
 The construction from \cite{bp22} leading to Theorem~\ref{th:counter} gives $\mathcal{L}=\alpha\mathcal{L}_1+\beta\mathcal{L}_2$, where $\alpha,\beta\geq0$ and $\alpha+\beta>0$. Thus, by alignment with $0$, $x'=\alpha V_{k_1}+\beta V_{k_2}$, and, by \cite[Lem.~2]{bp22}, $\mathcal{L}$ has positive curvature. 
 Let $n\colon\operatorname{bd}\mathcal{L}\to\mathbb{S}$ be its Gauss map. By \cite[Lem.~2]{bp22} again, $n^{-1}(u)=\alpha n_1^{-1}(u)+\beta n_2^{-1}(u)=\alpha V_{k_1}+\beta V_{k_2}=x'$, i.e, $n(x')=u$. 
\end{proof}

\subsection{Counterexample 1: Line-search strategy}
\label{sec:ls}

We start with a counterexample for the line-search strategy~\ref{step:ls}.

\begin{ctr}
 There exist $\mathcal{C}$ and $f$, satisfying Assumption~\ref{aspt}, and $x_0\in\mathcal{C}$ such that any sequence $(x_t)_{t\in\mathbb{N}}$ generated by Frank-Wolfe algorithm using the line-search strategy~\ref{step:ls} does not converge.
\end{ctr}

Let $\mathcal{C}=\operatorname{conv}\{(-1,0),(0,1),(1,0)\}$ and $f$ be defined via Theorem~\ref{th:counter} with $A_0=(-1/2,0)$, $B_0=(-1/4,3/4)$, $C_0=(1/4,3/4)$, $D_0=(1/2,0)$, 
and, for all $k\in\mathbb{N}$,
\begin{align*}
 &A_{k+1}=\displaystyle\left(-\frac{1}{4}+\frac{\langle B_{k+1},e_2\rangle}{\langle B_k,e_2\rangle}\left(\langle A_k,e_1\rangle+\frac{1}{4}\right),0\right),\\
 &B_{k+1}=\displaystyle\left(-\frac{1}{4},\frac{3}{5}\langle C_k,e_2\rangle\right),\\
 &C_{k+1}=\displaystyle\left(\frac{1}{4},\frac{3}{5}\langle B_k,e_2\rangle\right),\\
 &D_{k+1}=\displaystyle\left(\frac{1}{4}+\frac{\langle C_{k+1},e_2\rangle}{\langle C_k,e_2\rangle}\left(\langle D_k,e_1\rangle-\frac{1}{4}\right),0\right),
\end{align*}
and 
 $\mathcal{P}_k=\operatorname{conv}\{A_k,B_k,C_k,D_k,-D_k,-C_k,-B_k,-A_k\}$. 
Then $\mathcal{C}$ and $f$ satisfy Assumption~\ref{aspt} so Theorem~\ref{th:fw} holds.
The solution set is $\argmin_\mathcal{C}f=\bigcap_{k\in\mathbb{N}}\mathcal{P}_k=\left[(-1/4,0),(1/4,0)\right]$.
The construction is such that for all $k\in\mathbb{N}$, the points $B_k$, $C_{k+1}$, $(1,0)$, and the points $C_k$, $B_{k+1}$, $(-1,0)$, are aligned, and $\widehat{A_kB_kC_k}=\widehat{A_0B_0C_0}$ and $\widehat{B_kC_kD_k}=\widehat{B_0C_0D_0}$.
An illustration is presented in Figure~\ref{fig:ctr2}.

\begin{figure}[h]
\vspace{2mm}
 \centering 
 \begin{tikzpicture}[thick, scale=4]
  
  \coordinate (x0) at (0, 1);
  \coordinate (v1) at (-1, 0);
  \coordinate (v2) at (1, 0);
  
  \coordinate (a1) at (0.25, 0.75);
  \coordinate (b1) at (0.5, 0);
  \coordinate (c1) at (-0.5, 0);
  \coordinate (d1) at (-0.25, 0.75);
  
  \coordinate (a2) at (0.25, 0.45);
  \coordinate (b2) at (0.4, 0);
  \coordinate (c2) at (-0.4, 0);
  \coordinate (d2) at (-0.25, 0.45);
  
  \coordinate (a3) at (0.25, 0.27);
  \coordinate (b3) at (0.34, 0);
  \coordinate (c3) at (-0.34, 0);
  \coordinate (d3) at (-0.25, 0.27);
  
  \coordinate (a4) at (0.25, 0.162);
  \coordinate (b4) at (0.304, 0);
  \coordinate (c4) at (-0.304, 0);
  \coordinate (d4) at (-0.25, 0.162);
  
  \coordinate (a5) at (0.25, 0.0972);
  \coordinate (b5) at (0.2824, 0);
  \coordinate (c5) at (-0.2824, 0);
  \coordinate (d5) at (-0.25, 0.0972);
  
  \coordinate (a6) at (0.25, 0.0583);
  \coordinate (b6) at (0.2694, 0);
  \coordinate (c6) at (-0.2694, 0);
  \coordinate (d6) at (-0.25, 0.0583);
  
  \coordinate (a7) at (0.25, 0.0350);
  \coordinate (b7) at (0.2617, 0);
  \coordinate (c7) at (-0.2617, 0);
  \coordinate (d7) at (-0.25, 0.0350);
  
  \draw[blue!30, dotted] (d1) -- (1, 0);
  \draw[blue!30, dotted] (a1) -- (-1, 0);
  \draw[blue!30, dotted] (d2) -- (1, 0);
  \draw[blue!30, dotted] (a2) -- (-1, 0);
  \draw[blue!30, dotted] (d3) -- (1, 0);
  \draw[blue!30, dotted] (a3) -- (-1, 0);
  \draw[blue!30, dotted] (d4) -- (1, 0);
  \draw[blue!30, dotted] (a4) -- (-1, 0);
    
  \draw[orange] (c1) -- (d1) -- (a1) -- (b1);
  \draw[orange] (c2) -- (d2) -- (a2) -- (b2);
  \draw[orange] (c3) -- (d3) -- (a3) -- (b3);
  \draw[orange] (c4) -- (d4) -- (a4) -- (b4);
  \draw[orange] (c5) -- (d5) -- (a5) -- (b5);
  \draw[orange] (c6) -- (d6) -- (a6) -- (b6);
  \draw[orange] (c7) -- (d7) -- (a7) -- (b7);
  
\draw (-1/4, 0) -- (v1) -- (x0) -- (v2) -- (1/4, 0);
  \draw[crimson] (-1/4, 0) -- (1/4, 0);

  \draw[fill=black] (c1) circle (0.23pt) node [below] {$A_0\quad$};
  \draw[fill=black] (d1) circle (0.23pt) node [above] {$B_0\quad$};
  \draw[fill=black] (a1) circle (0.23pt) node [above] {$\quad C_0$};
  \draw[fill=black] (b1) circle (0.23pt) node [below] {$\quad D_0$};
  
  \draw[fill=black] (c2) circle (0.23pt) node [below] {$A_1$};
  \draw[fill=black] (d2) circle (0.23pt) node [above] {$B_1$};
  \draw[fill=black] (a2) circle (0.23pt) node [above] {$C_1$};
  \draw[fill=black] (b2) circle (0.23pt) node [below] {$D_1$};
  
  \draw[fill=black] (d3) circle (0.23pt);
  \draw[fill=black] (a3) circle (0.23pt);
 
  \draw[fill=black] (d4) circle (0.23pt);
  \draw[fill=black] (a4) circle (0.23pt);
  \draw[fill=black] (d5) circle (0.23pt);
  \draw[fill=black] (a5) circle (0.23pt);
 
  \draw[fill=black] (v1) circle (0.23pt) node [below] {$(-1,0)$};
  \draw[fill=black] (v2) circle (0.23pt) node [below] {$(1,0)$};
  
 \end{tikzpicture}
 \caption{The constraint set (in black), polygonal sketches of the objective function (in orange), and the solution set (in red).
 For all $k\in\mathbb{N}$, the points $B_k$, $C_{k+1}$, $(1,0)$, and the points $C_k$, $B_{k+1}$, $(-1,0)$, are aligned, and $\widehat{A_kB_kC_k}=\widehat{A_0B_0C_0}$ and $\widehat{B_kC_kD_k}=\widehat{B_0C_0D_0}$.}
 \label{fig:ctr2}
\end{figure}
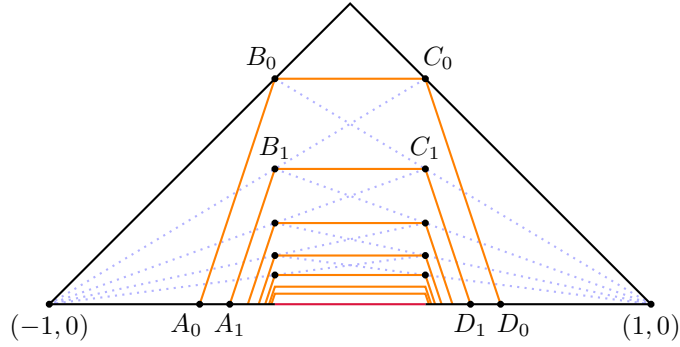

The construction allows to choose $(u_k)_{k\in\mathbb{N}}$ such that
$u_0\in\mathcal{K}_{\mathcal{P}_0}C_0$ and, for all $k\geq1$,
\begin{align*}
 \begin{cases}
  u_k\in\mathcal{K}_{\mathcal{P}_k}C_k\text{ and }\langle C_k-B_{k-1},u_k\rangle=0&\text{if }k\text{ is even};\\
  u_k\in\mathcal{K}_{\mathcal{P}_k}B_k\text{ and }\langle B_k-C_{k-1},u_k\rangle=0&\text{if }k\text{ is odd}.
 \end{cases}
\end{align*}
Let $x_0=C_0$. By induction and using Lemma~\ref{cor:ls}, we obtain for all $t\in\mathbb{N}$,
\begin{align*}
 x_t=
 \begin{cases}
  C_t&\text{if }t\text{ is even};\\
  B_t&\text{if }t\text{ is odd}.
 \end{cases}
\end{align*}
Therefore, $\langle x_t,e_1\rangle
 =(-1)^t/4$ for all $t\in\mathbb{N}$, so $(x_t)_{t\in\mathbb{N}}$ does not converge.
An illustration is presented in Figure~\ref{fig:ls}.

\begin{figure}[h]
\vspace{2mm}
 \centering 
 \begin{tikzpicture}[thick, scale=4]
  
  \coordinate (x0) at (0, 1);
  \coordinate (v1) at (-1, 0);
  \coordinate (v2) at (1, 0);
  
  \coordinate (a1) at (0.25, 0.75);
  \coordinate (b1) at (0.5, 0);
  \coordinate (c1) at (-0.5, 0);
  \coordinate (d1) at (-0.25, 0.75);
  
  \coordinate (a2) at (0.25, 0.45);
  \coordinate (b2) at (0.4, 0);
  \coordinate (c2) at (-0.4, 0);
  \coordinate (d2) at (-0.25, 0.45);
  
  \coordinate (a3) at (0.25, 0.27);
  \coordinate (b3) at (0.34, 0);
  \coordinate (c3) at (-0.34, 0);
  \coordinate (d3) at (-0.25, 0.27);
  
  \coordinate (a4) at (0.25, 0.162);
  \coordinate (b4) at (0.304, 0);
  \coordinate (c4) at (-0.304, 0);
  \coordinate (d4) at (-0.25, 0.162);
  
  \coordinate (a5) at (0.25, 0.0972);
  \coordinate (b5) at (0.2824, 0);
  \coordinate (c5) at (-0.2824, 0);
  \coordinate (d5) at (-0.25, 0.0972);
  
  \coordinate (a6) at (0.25, 0.0583);
  \coordinate (b6) at (0.2694, 0);
  \coordinate (c6) at (-0.2694, 0);
  \coordinate (d6) at (-0.25, 0.0583);
  
  \coordinate (a7) at (0.25, 0.0350);
  \coordinate (b7) at (0.2617, 0);
  \coordinate (c7) at (-0.2617, 0);
  \coordinate (d7) at (-0.25, 0.0350);
    
  \draw[orange!50] (c1) -- (d1) -- (a1) -- (b1); 
  \draw[orange!50] (c2) -- (d2) -- (a2) -- (b2);
  \draw[orange!50] (c3) -- (d3) -- (a3) -- (b3);
  \draw[orange!50] (c4) -- (d4) -- (a4) -- (b4);
  \draw[orange!50] (c5) -- (d5) -- (a5) -- (b5);
  \draw[orange!50] (c6) -- (d6) -- (a6) -- (b6);
  \draw[orange!50] (c7) -- (d7) -- (a7) -- (b7);

  \draw[->][ForestGreen] (0.25, 0.75) -- (0.25+0.04, 0.75+0.0916);
  \draw[->][ForestGreen] (-0.25, 0.45) -- (-0.25-0.0514, 0.45+0.0857);
  \draw[->][ForestGreen] (0.25, 0.27) -- (0.25+0.0339, 0.27+0.0941);
  \draw[->][ForestGreen] (-0.25, 0.162) -- (-0.25-0.0211, 0.162+0.0977);
  \draw[->][ForestGreen] (0.25, 0.0972) -- (0.25+0.0128, 0.0972+0.0992);
  \draw[->][ForestGreen] (-0.25, 0.0583) -- (-0.25-0.0078, 0.0583+0.0997);
  \draw[->][ForestGreen] (0.25, 0.0350) -- (0.25+0.0046, 0.0350+0.0999);

  \draw[->][blue] (a1) -- (d2);
  \draw[->][blue] (d2) -- (a3);
  \draw[->][blue] (a3) -- (d4);
  \draw[->][blue] (d4) -- (a5);
  \draw[->][blue] (a5) -- (d6);
  \draw[->][blue] (d6) -- (a7);
  
\draw (-1/4, 0) -- (v1) -- (x0) -- (v2) -- (1/4, 0);
  \draw[crimson] (-1/4, 0) -- (1/4, 0);

  \draw[fill=black] (a1) circle (0.23pt) node [right] {$x_0$};
  \draw[fill=black] (d2) circle (0.23pt);
  \draw[fill=black] (a3) circle (0.23pt);
  \draw[fill=black] (d4) circle (0.23pt);
  \draw[fill=black] (a5) circle (0.23pt);
  \draw[fill=black] (d6) circle (0.23pt);
  \draw[fill=black] (a7) circle (0.23pt);
  
 \end{tikzpicture}
 \caption{The constraint set (in black), polygonal sketches of the objective function (in orange), the solution set (in red), gradient directions (in green), and the trajectory (in blue) of the sequence $(x_t)_{t\in\mathbb{N}}$ generated by the Frank-Wolfe algorithm with the line-search strategy~\ref{step:ls} starting from $x_0$. The abscisse of $x_t$ is $(-1)^t/4$ for all $t\in\mathbb{N}$, so $(x_t)_{t\in\mathbb{N}}$ does not converge.}
 \label{fig:ls}
\end{figure}

\subsection{Counterexample 2: Line-search strategy and solution set in the interior}
\label{sec:int}

In our next counterexample, the solution set $\argmin_\mathcal{C}f$ lies in the interior of the constraint set. 

\begin{ctr}
 There exist $\mathcal{C}$ and $f$, satisfying Assumption~\ref{aspt} and $\argmin_\mathcal{C}f\subset\operatorname{int}\mathcal{C}$, and $x_0\in\mathcal{C}$ such that any sequence $(x_t)_{t\in\mathbb{N}}$ generated by Frank-Wolfe algorithm using the line-search strategy~\ref{step:ls} does not converge.
\end{ctr}

Let $\mathcal{C}=\operatorname{conv}\{(-1,-1),(-1,1),(1,1),(1,-1)\}$ and $f$ be defined via Theorem~\ref{th:counter} with $A_0=(-1/10,-1)$, $B_0=(-1,1/10)$, $C_0=(1/10,1)$, $D_0=(1,-1/10)$, $\lambda_0=1$, and, for all $k\in\mathbb{N}$, $\lambda_{k+1}=110\lambda_k/(90+101\lambda_k)$, 
\begin{align*}
 A_k=\lambda_kA_0,\quad
 B_k=\lambda_kB_0,\quad
 C_k=\lambda_kC_0,\quad
 D_k=\lambda_kD_0,
\end{align*}
and $\mathcal{P}_k=\operatorname{conv}\{\lambda_kA_0,\lambda_kB_0,\lambda_kC_0,\lambda_kD_0\}=\lambda_k\mathcal{P}_0$. 
Thus, $(\lambda_k)_{k\in\mathbb{N}}$ is a decreasing sequence that converges to $1/5$. Then $\mathcal{C}$ and $f$ satisfy Assumption~\ref{aspt} so Theorem~\ref{th:fw} holds. The solution set is $\argmin_\mathcal{C}f=\bigcap_{k\in\mathbb{N}}\mathcal{P}_k=(1/5)\mathcal{P}_0$.
The construction is such that for all $k\in\mathbb{N}$, the points $A_k,B_{k+1},(-1,1)$, the points $B_k,C_{k+1},(1,1)$, the points $C_k,D_{k+1},(1,-1)$, and the points $D_k$, $A_{k+1}$, $(-1,-1)$, are aligned.
An illustration is presented in Figure~\ref{fig:ctr3}.

\begin{figure}[h]
\vspace{2mm}
 \centering
\begin{tikzpicture}[thick, scale=3]
  
  \coordinate (a) at (-1, -1);
  \coordinate (b) at (-1, 1);
  \coordinate (c) at (1, 1);
  \coordinate (d) at (1, -1);
  
  \coordinate (dg1) at (-1, 0.1);
  \coordinate (dg2) at (0.1, 1);
  \coordinate (dg3) at (1, -0.1);
  \coordinate (dg4) at (-0.1, -1);
  
  \coordinate (e) at (-0.02, -0.2);
  \coordinate (f) at (-0.2, 0.02);
  \coordinate (g) at (0.02, 0.2);
  \coordinate (h) at (0.2, -0.02);
  
  \coordinate (a1) at (-0.058, -0.576);
  \coordinate (b1) at (-0.576, 0.058);
  \coordinate (c1) at (0.058, 0.576);
  \coordinate (d1) at (0.576, -0.058);
  
  \coordinate (a2) at (-0.043, -0.428);
  \coordinate (b2) at (-0.428, 0.043);
  \coordinate (c2) at (0.043, 0.428);
  \coordinate (d2) at (0.428, -0.043);
  
  \coordinate (a3) at (-0.035, -0.353);
  \coordinate (b3) at (-0.353, 0.035);
  \coordinate (c3) at (0.035, 0.353);
  \coordinate (d3) at (0.353, -0.035);
  
  \coordinate (a4) at (-0.031, -0.309);
  \coordinate (b4) at (-0.309, 0.031);
  \coordinate (c4) at (0.031, 0.309);
  \coordinate (d4) at (0.309, -0.031);
  
  \coordinate (a5) at (-0.028, -0.28);
  \coordinate (b5) at (-0.28, 0.028);
  \coordinate (c5) at (0.028, 0.28);
  \coordinate (d5) at (0.28, -0.028);
  
  \coordinate (a6) at (-0.026, -0.261);
  \coordinate (b6) at (-0.261, 0.026);
  \coordinate (c6) at (0.026, 0.261);
  \coordinate (d6) at (0.261, -0.026);
  
  \coordinate (x0) at (-0.1, -1);
  \coordinate (x1) at (-0.576, 0.058);
  \coordinate (x2) at (0.043, 0.428);
  \coordinate (x3) at (0.353, -0.035);
  \coordinate (x4) at (-0.031, -0.309);
  \coordinate (x5) at (-0.28, 0.028);
  \coordinate (x6) at (0.026, 0.261);
  
  \draw[blue!40, dotted] (x0) -- (-1, 1);
  \draw[blue!40, dotted] (dg1) -- (1, 1);
  \draw[blue!40, dotted] (dg2) -- (1, -1);
  \draw[blue!40, dotted] (dg3) -- (-1, -1);
  
  \draw[blue!20, dotted] (a1) -- (-1, 1);
  \draw[blue!20, dotted] (b1) -- (1, 1);
  \draw[blue!20, dotted] (c1) -- (1, -1);
  \draw[blue!20, dotted] (d1) -- (-1, -1);
  
  \draw[orange] (dg1) -- (dg2) -- (dg3) -- (dg4) -- (dg1) -- (dg2);
  \draw[orange] (a1) -- (b1) -- (c1) -- (d1) -- (a1) -- (b1);
  \draw[orange] (a2) -- (b2) -- (c2) -- (d2) -- (a2) -- (b2);
  \draw[orange] (a3) -- (b3) -- (c3) -- (d3) -- (a3) -- (b3);
  \draw[orange] (a4) -- (b4) -- (c4) -- (d4) -- (a4) -- (b4);
  \draw[orange] (a5) -- (b5) -- (c5) -- (d5) -- (a5) -- (b5);
  \draw[orange] (a6) -- (b6) -- (c6) -- (d6) -- (a6) -- (b6);
  
  \draw (a) -- (b) -- (c) -- (d) -- (a) -- (b);

  \draw[fill=black] (dg1) circle (0.3pt) node [left] {$B_0$};
  \draw[fill=black] (dg2) circle (0.3pt) node [above] {$C_0$};
  \draw[fill=black] (dg3) circle (0.3pt) node [right] {$D_0$};
  
  \draw[fill=black] (x0) circle (0.3pt) node [below] {$A_0$};
  
  \draw[fill=black] (a1) circle (0.3pt) node [below] {$A_1$};
  \draw[fill=black] (x1) circle (0.3pt) node [left] {$B_1$};
  \draw[fill=black] (c1) circle (0.3pt) node [above] {$C_1$};
  \draw[fill=black] (d1) circle (0.3pt) node [right] {$D_1$};

  \draw[fill=black] (a) circle (0.3pt) node [below] {$(-1,-1)$};
  \draw[fill=black] (b) circle (0.3pt) node [above] {$(-1,1)$};
  \draw[fill=black] (c) circle (0.3pt) node [above] {$(1,1)$};
  \draw[fill=black] (d) circle (0.3pt) node [below] {$(1,-1)$};
  
  \draw[fill=black] (a2) circle (0.3pt);
  \draw[fill=black] (b2) circle (0.3pt);
  \draw[fill=black] (c2) circle (0.3pt);
  \draw[fill=black] (d2) circle (0.3pt);
  
  \draw[crimson, fill=crimson] (e) -- (f) -- (g) -- (h) -- (e) -- (f);

 \end{tikzpicture}
 \caption{The constraint set (in black), polygonal sketches of the objective function (in orange), and the solution set (in red). 
 For all $k\in\mathbb{N}$, the points $A_k,B_{k+1},(-1,1)$, the points $B_k,C_{k+1},(1,1)$, the points $C_k,D_{k+1},(1,-1)$, and the points $D_k,A_{k+1},(-1,-1)$, are aligned.}
 \label{fig:ctr3}
\end{figure}
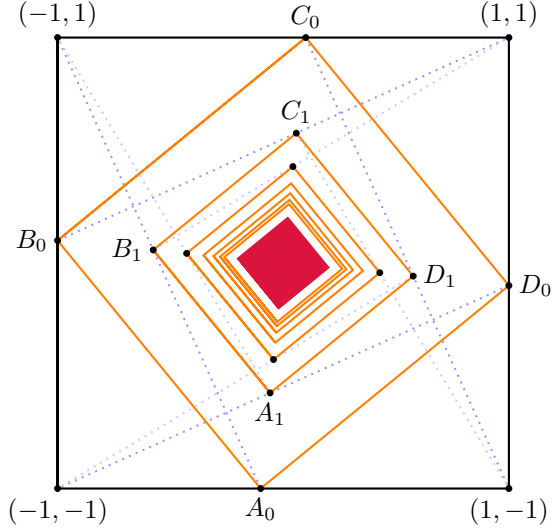

The construction allows to choose $(u_k)_{k\in\mathbb{N}}$ such that $u_0\in\mathcal{K}_{\mathcal{P}_0}A_0$ 
and, for all $k\geq1$,
\begin{align*}
 \begin{cases}
  u_k\in\mathcal{K}_{\mathcal{P}_k}A_k\text{ and }\langle A_k-D_{k-1},u_k\rangle=0&\text{if }k\equiv0\;(\operatorname{mod} 4);\\
  u_k\in\mathcal{K}_{\mathcal{P}_k}B_k\text{ and }\langle B_k-A_{k-1},u_k\rangle=0&\text{if }k\equiv1\;(\operatorname{mod} 4);\\
  u_k\in\mathcal{K}_{\mathcal{P}_k}C_k\text{ and }\langle C_k-B_{k-1},u_k\rangle=0&\text{if }k\equiv2\;(\operatorname{mod} 4);\\
  u_k\in\mathcal{K}_{\mathcal{P}_k}D_k\text{ and }\langle D_k-C_{k-1},u_k\rangle=0&\text{if }k\equiv3\;(\operatorname{mod} 4).
 \end{cases}
\end{align*}
Let $x_0=A_0$. By induction and using Lemma~\ref{cor:ls}, we obtain for all $t\in\mathbb{N}$,
\begin{align*}
 x_t=
 \begin{cases}
  A_t&\text{if }t\equiv0\;(\operatorname{mod} 4);\\
  B_t&\text{if }t\equiv1\;(\operatorname{mod} 4);\\
  C_t&\text{if }t\equiv2\;(\operatorname{mod} 4);\\
  D_t&\text{if }t\equiv3\;(\operatorname{mod} 4).
 \end{cases}
\end{align*}
Therefore, $\|x_{t+1}-x_t\|\geq\sqrt{2}/5$ for all $t\in\mathbb{N}$, where $\sqrt{2}/5$ is the side length of the square $\argmin_\mathcal{C}f=(1/5)\mathcal{P}_0$, so $(x_t)_{t\in\mathbb{N}}$ does not converge.
An illustration is presented in Figure~\ref{fig:int}.

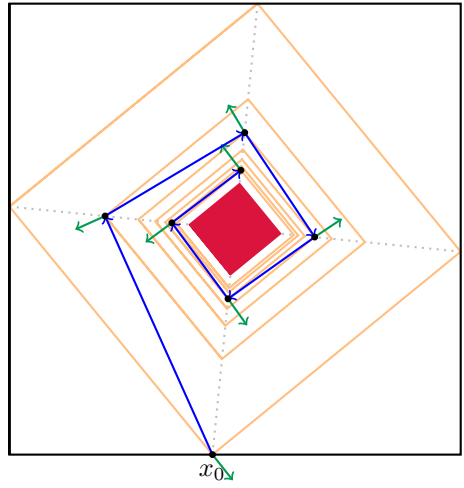
\begin{figure}[h]
\vspace{2mm}
 \centering
\begin{tikzpicture}[thick, scale=3]
  
  \coordinate (a) at (-1, -1);
  \coordinate (b) at (-1, 1);
  \coordinate (c) at (1, 1);
  \coordinate (d) at (1, -1);
  
  \coordinate (dg1) at (-1, 0.1);
  \coordinate (dg2) at (0.1, 1);
  \coordinate (dg3) at (1, -0.1);
  \coordinate (dg4) at (-0.1, -1);
  
  \coordinate (e) at (-0.02, -0.2);
  \coordinate (f) at (-0.2, 0.02);
  \coordinate (g) at (0.02, 0.2);
  \coordinate (h) at (0.2, -0.02);
  
  \coordinate (a1) at (-0.058, -0.576);
  \coordinate (b1) at (-0.576, 0.058);
  \coordinate (c1) at (0.058, 0.576);
  \coordinate (d1) at (0.576, -0.058);
  
  \coordinate (a2) at (-0.043, -0.428);
  \coordinate (b2) at (-0.428, 0.043);
  \coordinate (c2) at (0.043, 0.428);
  \coordinate (d2) at (0.428, -0.043);
  
  \coordinate (a3) at (-0.035, -0.353);
  \coordinate (b3) at (-0.353, 0.035);
  \coordinate (c3) at (0.035, 0.353);
  \coordinate (d3) at (0.353, -0.035);
  
  \coordinate (a4) at (-0.031, -0.309);
  \coordinate (b4) at (-0.309, 0.031);
  \coordinate (c4) at (0.031, 0.309);
  \coordinate (d4) at (0.309, -0.031);
  
  \coordinate (a5) at (-0.028, -0.28);
  \coordinate (b5) at (-0.28, 0.028);
  \coordinate (c5) at (0.028, 0.28);
  \coordinate (d5) at (0.28, -0.028);
  
  \coordinate (a6) at (-0.026, -0.261);
  \coordinate (b6) at (-0.261, 0.026);
  \coordinate (c6) at (0.026, 0.261);
  \coordinate (d6) at (0.261, -0.026);
  
  \coordinate (x0) at (-0.1, -1);
  \coordinate (x1) at (-0.576, 0.058);
  \coordinate (x2) at (0.043, 0.428);
  \coordinate (x3) at (0.353, -0.035);
  \coordinate (x4) at (-0.031, -0.309);
  \coordinate (x5) at (-0.28, 0.028);
  \coordinate (x6) at (0.026, 0.261);

  \draw[gray!50, dotted] (dg1) -- (dg3);
  \draw[gray!50, dotted] (dg2) -- (dg4);
  
  \draw[orange!50] (dg1) -- (dg2) -- (dg3) -- (dg4) -- (dg1) -- (dg2);
  \draw[orange!50] (a1) -- (b1) -- (c1) -- (d1) -- (a1) -- (b1);
  \draw[orange!50] (a2) -- (b2) -- (c2) -- (d2) -- (a2) -- (b2);
  \draw[orange!50] (a3) -- (b3) -- (c3) -- (d3) -- (a3) -- (b3);
  \draw[orange!50] (a4) -- (b4) -- (c4) -- (d4) -- (a4) -- (b4);
  \draw[orange!50] (a5) -- (b5) -- (c5) -- (d5) -- (a5) -- (b5);
  \draw[orange!50] (a6) -- (b6) -- (c6) -- (d6) -- (a6) -- (b6);
  
  \draw (a) -- (b) -- (c) -- (d) -- (a) -- (b);

  \draw[->][ForestGreen] (x0) -- (-0.01, -1.1136);
  \draw[->][ForestGreen] (x1) -- (-0.7081, -0.0014);
  \draw[->][ForestGreen] (x2) -- (-0.0313, 0.5524);
  \draw[->][ForestGreen] (x3) -- (0.4734, 0.0456);
  \draw[->][ForestGreen] (x4) -- (0.0532, -0.4270);
  \draw[->][ForestGreen] (x5) -- (-0.3965, -0.0581);
  \draw[->][ForestGreen] (x6) -- (-0.0618, 0.3763);

  \draw[->][blue] (x0) -- (x1);
  \draw[->][blue] (x1) -- (x2);
  \draw[->][blue] (x2) -- (x3);
  \draw[->][blue] (x3) -- (x4);
  \draw[->][blue] (x4) -- (x5);
  \draw[->][blue] (x5) -- (x6);
  
  \draw[fill=black] (x0) circle (0.3pt) node [below] {$x_0$};
  \draw[fill=black] (x1) circle (0.3pt);
  \draw[fill=black] (x2) circle (0.3pt);
  \draw[fill=black] (x3) circle (0.3pt);
  \draw[fill=black] (x4) circle (0.3pt);
  \draw[fill=black] (x5) circle (0.3pt);
  \draw[fill=black] (x6) circle (0.3pt);
  
  \draw[crimson, fill=crimson] (e) -- (f) -- (g) -- (h) -- (e) -- (f);
  
 \end{tikzpicture}
 \caption{The constraint set (in black), polygonal sketches of the objective function (in orange), the solution set (in red), gradient directions (in green), and the trajectory (in blue) of the sequence $(x_t)_{t\in\mathbb{N}}$ generated by the Frank-Wolfe algorithm with the line-search strategy~\ref{step:ls} starting from $x_0$. The sequence $(x_t)_{t\in\mathbb{N}}$ circles around the solution set and does not converge.}
 \label{fig:int}
\end{figure}

\subsection{Counterexample 3: Closed-loop strategy}
\label{sec:notls}

Our next counterexample involves the closed-loop strategy~\ref{step:closed}.

\begin{ctr}
 There exist $\mathcal{C}$ and $f$, satisfying Assumption~\ref{aspt}, and $x_0\in\mathcal{C}$ such that any sequence $(x_t)_{t\in\mathbb{N}}$ generated by Frank-Wolfe algorithm using the closed-loop strategy~\ref{step:closed} does not converge.
\end{ctr}

Let $\mathcal{C}=[-1,1]\times[0,2^K]$, where $K\in\mathbb{N}$, and $f$ be defined via Theorem~\ref{th:counter} with, for all $k\in\mathbb{N}$, 
\begin{alignat*}{2}
 &A_k=\left((-1)^{k+1}\frac{61}{35},\frac{9}{8}\frac{61}{35}\frac{1}{2^{k+1-K}}\right),
 &&B_k=\left((-1)^k\frac{61}{35},\frac{17}{16}\frac{61}{35}\frac{1}{2^{k+1-K}}\right),\\
 &D_k=\left((-1)^{k+1},\frac{9}{8}\frac{1}{2^{k+1-K}}\right),
 &&C_k=\left((-1)^k,\frac{17}{16}\frac{1}{2^{k+1-K}}\right),\\
 &D_k'=\left((-1)^{k+1},\frac{1}{2^{k+1-K}}\right),
 &&C_k'=\left((-1)^k\frac{61}{35},\frac{1}{2^{k+1-K}}\right),\\
 &Y_k=
\begin{cases}
  \left(-\displaystyle\frac{61}{35}\left(1+\displaystyle\frac{1}{2^k}\right),0\right)&\text{if }k\text{ is even};\\
  \left(-\displaystyle\frac{61}{35}\left(1+\displaystyle\frac{17}{16}\frac{8}{9}\frac{1}{2^k}\right),0\right)&\text{if }k\text{ is odd},
 \end{cases}
 \quad\quad
 &&Y_k'=\left(-\frac{61}{35}-\frac{8}{9}\frac{1}{2^k},0\right),
 \\
 &Z_k=
\begin{cases}
  \left(\displaystyle\frac{61}{35}\left(1+\displaystyle\frac{1}{2^k}\right),0\right)&\text{if }k\text{ is even};\\
  \left(\displaystyle\frac{61}{35}\left(1+\displaystyle\frac{9}{8}\frac{16}{17}\frac{1}{2^k}\right),0\right)&\text{if }k\text{ is odd},
 \end{cases}
 &&Z_k'=\left(\frac{61}{35}+\frac{16}{17}\frac{1}{2^k},0\right),
\end{alignat*}
and
$\mathcal{P}_{2k}=\operatorname{conv}\{Y_k,A_k,B_k,Z_k,X_{2k}\}$ and $\mathcal{P}_{2k+1}=\operatorname{conv}\{Y_k',D_k',D_k,C_k,C_k',Z_k',X_{2k+1}\}$, where $X_k=(0,-1-1/(k+1))$.
Then $\mathcal{C}$ and $f$ satisfy Assumption~\ref{aspt} so Theorem~\ref{th:fw} holds. The solution set is $\argmin_\mathcal{C}f=\bigcap_{k\in\mathbb{N}}\mathcal{P}_k\cap\mathcal{C}=\left[(-1,0),(1,0)\right]$.
The construction is such that for all $k\in\mathbb{N}$, the lines $(A_kB_k)$ and $(C_kD_k)$ are parallel, $\langle B_k,e_2\rangle<\langle A_k,e_2\rangle$ and $\langle C_k,e_2\rangle<\langle D_k,e_2\rangle$, and the points $0$, $D_k$, $A_k$, and the points $0$, $C_k$, $B_k$, are aligned.
An illustration is presented in Figure~\ref{fig:all}.

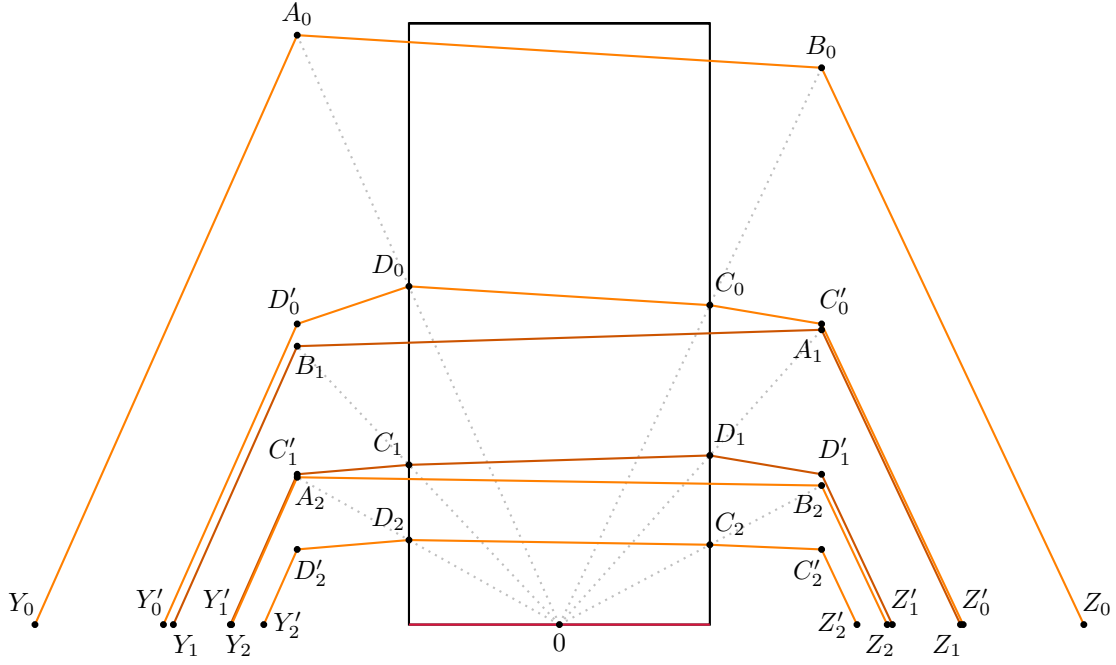
\begin{figure}[h]
\vspace{2mm}
 \centering
\begin{tikzpicture}[thick, scale=2]

  \coordinate (o) at (0, 0);
  
  \coordinate (s1) at (-2, 4);
  \coordinate (s2) at (2, 4);
  \coordinate (s3) at (-2, 2);
  \coordinate (s4) at (2, 2);
  \coordinate (s5) at (-2, 1);
  \coordinate (s6) at (2, 1);
  \coordinate (s7) at (-2, 1/2);
  \coordinate (s8) at (2, 1/2);
  
  \coordinate (S1) at (-1/2, 4);
  \coordinate (S2) at (1/2, 4);
  \coordinate (S3) at (1/2, 0);
  \coordinate (S4) at (-1/2, 0);
  
  \coordinate (C1) at (-1, 4);
  \coordinate (C2) at (1, 4);
  \coordinate (C3) at (1, 0);
  \coordinate (C4) at (-1, 0);
  
  \coordinate (a) at (-61/35, 3.921);
  \coordinate (b) at (61/35, 3.704);
  \coordinate (c) at (1, 2.125);
  \coordinate (d) at (-1, 2.25);
  \coordinate (c') at (61/35, 2);
  \coordinate (d') at (-61/35, 2);
  \coordinate (y) at (-3.486, 0);
  \coordinate (z) at (3.486, 0);
  \coordinate (y') at (-2.632, 0);
  \coordinate (z') at (2.684, 0);
  \coordinate (na) at (-1.718, 4.101);
  \coordinate (nb) at (1.768, 3.884);
  \coordinate (nc) at (1.025, 2.305);
  \coordinate (nd) at (-0.975, 2.43);

  \coordinate (aa) at (61/35, 1.961);
  \coordinate (bb) at (-61/35, 1.852);
  \coordinate (cc) at (-1, 1.0625);
  \coordinate (dd) at (1, 1.125);
  \coordinate (cc') at (-61/35, 1);
  \coordinate (dd') at (61/35, 1);
  \coordinate (yy) at (-2.566, 0);
  \coordinate (zz) at (2.666, 0);
  \coordinate (yy') at (-2.187, 0);
  \coordinate (zz') at (2.213, 0);

  \coordinate (aaa) at (-61/35, 0.980);
  \coordinate (bbb) at (61/35, 0.925);
  \coordinate (ccc) at (1, 0.531);
  \coordinate (ddd) at (-1, 0.5625);
  \coordinate (ccc') at (61/35, 1/2);
  \coordinate (ddd') at (-61/35, 1/2);
  \coordinate (yyy) at (-2.179, 0);
  \coordinate (zzz) at (2.179, 0);
  \coordinate (yyy') at (-1.965, 0);
  \coordinate (zzz') at (1.978, 0);
  
  \coordinate (e) at (-1, 7/2);
  \coordinate (f) at (1, 7/2);
  \coordinate (g) at (1, 5/2);
  \coordinate (h) at (-1, 5/2);
  
  \coordinate (i) at (-1, 13/4);
  \coordinate (j) at (1, 13/4);

 \draw[gray!50, dotted] (o) -- (bbb);
 \draw[gray!50, dotted] (o) -- (aaa);
 \draw[gray!50, dotted] (o) -- (bb);
 \draw[gray!50, dotted] (o) -- (aa);
 \draw[gray!50, dotted] (o) -- (a);
 \draw[gray!50, dotted] (o) -- (b);
 
 \draw (C1) -- (C2) -- (C3) -- (C4) -- (C1) -- (C2);
\draw[crimson] (C4) -- (C3);

 \draw[orange] (y) -- (a) -- (b) -- (z);
  
 \draw[orange] (y') -- (d') -- (d) -- (c) -- (c') -- (z');
 
 \draw[burntorange] (yy) -- (bb) -- (aa) -- (zz);
 \draw[burntorange] (yy') -- (cc') -- (cc) -- (dd) -- (dd') -- (zz');
 
 \draw[orange] (yyy) -- (aaa) -- (bbb) -- (zzz);
 \draw[orange] (yyy') -- (ddd') -- (ddd) -- (ccc) -- (ccc') -- (zzz');

  \draw[fill=black] (o) circle (0.45pt) node [below] {$0$};
  
  \draw[fill=black] (a) circle (0.45pt) node [above] {$A_0$};
  \draw[fill=black] (b) circle (0.45pt) node [above] {$B_0$};
  \draw[fill=black] (c) circle (0.45pt) node [above] {$\quad\;\;C_0$};
  \draw[fill=black] (d) circle (0.45pt) node [above] {$D_0\quad\;\;$};
  \draw[fill=black] (c') circle (0.45pt) node [above] {$\quad C_0'$};
  \draw[fill=black] (d') circle (0.45pt) node [above] {$D_0'\quad$};
  \draw[fill=black] (y) circle (0.45pt) node [above] {$Y_0\quad$};
  \draw[fill=black] (y') circle (0.45pt) node [above] {$Y_0'\quad$};
  \draw[fill=black] (z) circle (0.45pt) node [above] {$\quad Z_0$};
  \draw[fill=black] (z') circle (0.45pt) node [above] {$\quad Z_0'$};

\draw[fill=black] (aa) circle (0.45pt) node [below] {$A_1\quad$};
\draw[fill=black] (bb) circle (0.45pt) node [below] {$\quad B_1$};
\draw[fill=black] (cc) circle (0.45pt) node [above] {$C_1\quad\;\;$};
\draw[fill=black] (dd) circle (0.45pt) node [above] {$\quad\;\;D_1$};
\draw[fill=black] (cc') circle (0.45pt) node [above] {$C_1'\quad$};
\draw[fill=black] (dd') circle (0.45pt) node [above] {$\quad D_1'$};
\draw[fill=black] (yy) circle (0.45pt) node [below] {$\quad Y_1$};
\draw[fill=black] (yy') circle (0.45pt) node [above] {$Y_1'\quad$};
\draw[fill=black] (zz) circle (0.45pt) node [below] {$Z_1\quad$};
\draw[fill=black] (zz') circle (0.45pt) node [above] {$\quad Z_1'$};

\draw[fill=black] (aaa) circle (0.45pt) node [below] {$\quad A_2$};
\draw[fill=black] (bbb) circle (0.45pt) node [below] {$B_2\quad$};
\draw[fill=black] (ccc) circle (0.45pt) node [above] {$\quad\;\;C_2$};
\draw[fill=black] (ddd) circle (0.45pt) node [above] {$D_2\quad\;\;$};
\draw[fill=black] (ccc') circle (0.45pt) node [below] {$C_2'\quad$};
\draw[fill=black] (ddd') circle (0.45pt) node [below] {$\quad D_2'$};
\draw[fill=black] (yyy) circle (0.45pt) node [below] {$\;\;Y_2$};
\draw[fill=black] (zzz) circle (0.45pt) node [below] {$Z_2\;\;$};
\draw[fill=black] (yyy') circle (0.45pt) node [right] {$Y_2'$};
\draw[fill=black] (zzz') circle (0.45pt) node [left] {$Z_2'$};
 \end{tikzpicture}
 \caption{The constraint set (in black), polygonal sketches of the objective function (in orange and dark orange), and the solution set (in red). The polygonal sketches are studied by pairs, corresponding to the color tone, i.e., to the direction in which they lean (see Figure~\ref{fig:all2}). For all $k\in\mathbb{N}$, the lines $(A_kB_k)$ and $(C_kD_k)$ are parallel, $\langle B_k,e_2\rangle\leq\langle A_k,e_2\rangle$ and $\langle C_k,e_2\rangle\leq\langle D_k,e_2\rangle$, and the points $0$, $D_k$, $A_k$, and the points $0$, $C_k$, $B_k$, are aligned.}
 \label{fig:all}
\end{figure}

By properties of the construction, for all $k\in\mathbb{N}$,
\begin{alignat*}{2}
 &\mathcal{K}_{\mathcal{P}_{2k+1}}D_k
 \subset\mathcal{K}_{\mathcal{P}_{2k}}A_k,
 &&\mathcal{K}_{\mathcal{P}_{2k+1}}C_k
 \subset\mathcal{K}_{\mathcal{P}_{2k}}B_k,\\
 &\mathcal{K}_{\mathcal{P}_{4k+1}}D_{2k}
 \cap(\mathbb{R}^*_+)^2\neq\varnothing,
  &&\mathcal{K}_{\mathcal{P}_{4k+1}}C_{2k}
  \cap(\mathbb{R}^*_+)^2\neq\varnothing,\\
 &\mathcal{K}_{\mathcal{P}_{4k+3}}D_{2k+1}
 \cap(\mathbb{R}^*_-\times\mathbb{R}^*_+)\neq\varnothing,
\quad\quad&&\mathcal{K}_{\mathcal{P}_{4k+3}}C_{2k+1}
\cap(\mathbb{R}^*_-\times\mathbb{R}^*_+)\neq\varnothing.
\end{alignat*}
Thus, we can choose $(u_k)_{k\in\mathbb{N}}$ such that for all $k\in\mathbb{N}$,
\begin{enumerate}[label=(\roman*)]
 \item\label{u1} $\nabla f(A_k)$ and $\nabla f(D_k)$, and $\nabla f(B_k)$ and $\nabla f(C_k)$, are positively colinear;
 \item\label{u2} $\langle e_2,\nabla f(x)\rangle>0$ and $(-1)^k\langle e_1,\nabla f(x)\rangle>0$ for all $x\in\{A_k,B_k,C_k,D_k\}$.
\end{enumerate}
Then, for all $k\in\mathbb{N}$, let
\begin{alignat*}{2}
 &E_k=\left((-1)^{k+1},\frac{7}{4}\frac{1}{2^{k+1-K}}\right),\quad\quad
 &&F_k=\left((-1)^k,\frac{7}{4}\frac{1}{2^{k+1-K}}\right),\\
 &I_k=\left((-1)^{k+1},\frac{13}{8}\frac{1}{2^{k+1-K}}\right),
 &&J_k=\left((-1)^k,\frac{13}{8}\frac{1}{2^{k+1-K}}\right),\\
 &H_k=\left((-1)^{k+1},\frac{5}{4}\frac{1}{2^{k+1-K}}\right),\quad
 &&G_k=\left((-1)^k,\frac{5}{4}\frac{1}{2^{k+1-K}}\right).
\end{alignat*}
An illustration is presented in Figure~\ref{fig:all2}.

\begin{figure}[h]
\vspace{2mm}
 \centering
\begin{tikzpicture}[thick, scale=2]

  \coordinate (o) at (0, 0);
  
  \coordinate (s1) at (-2, 4);
  \coordinate (s2) at (2, 4);
  \coordinate (s3) at (-2, 2);
  \coordinate (s4) at (2, 2);
  \coordinate (s5) at (-2, 1);
  \coordinate (s6) at (2, 1);
  \coordinate (s7) at (-2, 1/2);
  \coordinate (s8) at (2, 1/2);
  
  \coordinate (S1) at (-1/2, 4.021);
  \coordinate (S2) at (1/2, 4.021);
  \coordinate (S3) at (1/2, 1.9);
  \coordinate (S4) at (-1/2, 1.9);
  
  \coordinate (C1) at (-1, 4.021);
  \coordinate (C2) at (1, 4.021);
  \coordinate (C3) at (1, 1.9);
  \coordinate (C4) at (-1, 1.9);
  
  \coordinate (a) at (-61/35, 3.921);
  \coordinate (b) at (61/35, 3.704);
  \coordinate (c) at (1, 2.125);
  \coordinate (d) at (-1, 2.25);
  \coordinate (c') at (61/35, 2);
  \coordinate (d') at (-61/35, 2);
  \coordinate (y) at (-3.485, 0);
  \coordinate (z) at (3.485, 0);
  \coordinate (y') at (-2.632, 0);
  \coordinate (z') at (2.684, 0);
\coordinate (na) at (-1.7328, 4.1207);
  \coordinate (nb) at (1.771, 3.902);
  \coordinate (nc) at (1.028, 2.323);
\coordinate (nd) at (-0.99, 2.4497);
  
  \coordinate (e) at (-1, 7/2);
  \coordinate (f) at (1, 7/2);
  \coordinate (g) at (1, 5/2);
  \coordinate (h) at (-1, 5/2);
  
  \coordinate (i) at (-1, 13/4);
  \coordinate (j) at (1, 13/4);
  \coordinate (k) at (1, 11/4);
  \coordinate (l) at (-1, 11/4);
  
  \draw[gray!67, fill=gray!67] (e) -- (f) -- (j) -- (i);
  \draw[gray!33, fill=gray!33] (i) -- (j) -- (g) -- (h); 
  
 \draw[dotted] (S1) -- (S4);
 \draw[dotted] (S2) -- (S3);
 \draw (C1) -- (C4);
 \draw (C2) -- (C3);
 
 \draw[orange] (-2.597, 2) -- (a) -- (b) -- (2.544, 2);
 
  \draw[->][ForestGreen] (a) -- (na);
  \draw[->][ForestGreen] (b) -- (nb);
  \draw[->][ForestGreen] (c) -- (nc);
  \draw[->][ForestGreen] (d) -- (nd);
  
 \draw[orange] (d') -- (d) -- (c) -- (c');
  
  \draw[fill=black] (a) circle (0.45pt) node [left] {$A_k$};
  \draw[fill=black] (b) circle (0.45pt) node [right] {$B_k$};
  \draw[fill=black] (c) circle (0.45pt) node [right] {$C_k$};
  \draw[fill=black] (d) circle (0.45pt) node [left] {$D_k$};
  \draw[fill=black] (c') circle (0.45pt) node [right] {$C_k'$};
  \draw[fill=black] (d') circle (0.45pt) node [left] {$D_k'$};
  
  \draw[fill=black] (e) circle (0.45pt) node [left] {$E_k$};
  \draw[fill=black] (f) circle (0.45pt) node [right] {$F_k$}; 
  \draw[fill=black] (g) circle (0.45pt) node [right] {$G_k$};
  \draw[fill=black] (h) circle (0.45pt) node [left] {$H_k$};
  
  \draw[fill=black] (i) circle (0.45pt) node [left] {$I_k$};
  \draw[fill=black] (j) circle (0.45pt) node [right] {$J_k$};
  
 \end{tikzpicture}
 \caption{A pair of polygonal sketches (in orange) and gradient directions (in green). Here, $k\in\mathbb{N}$ is even. Using the gradient directions, we show that the linear minimization oracle always returns $(-1, 0)$ in $\operatorname{conv}\{E_k,F_k,G_k,H_k\}$ here (Lemma~\ref{prop}\ref{iv}). Then, we show that, if $k$ is large enough, there exists an iterate $x_t\in\operatorname{conv}\{E_k,F_k,J_k,I_k\}$ (Lemma~\ref{prop}\ref{prop:k0}). Put together and using Thales' theorem, we can measure the minimum horizontal displacement of the iterates when they cross (vertically) the rectangle $\operatorname{conv}\{I_k,J_k,G_k,H_k\}$. This turns out to be a constant number, showing that $(x_t)_{t\in\mathbb{N}}$ does not converge (Lemma~\ref{prop}\ref{prop:end}).}
 \label{fig:all2}
\end{figure}
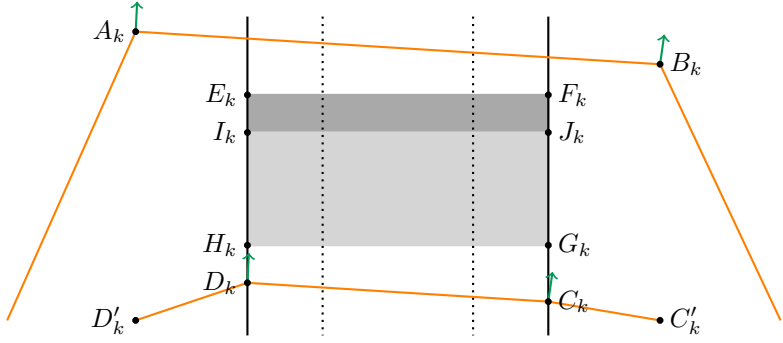

\newpage
\begin{lemma}
 \label{lem:efgh}
 Let $k\in\mathbb{N}$ and $x\in\mathcal{C}$. 
 \begin{enumerate}[label=(\roman*)]
  \item\label{e2} If $\langle x,e_2\rangle>0$, then $\langle e_2,\nabla f(x)\rangle>0$.
  \item\label{e1} If $x\in\operatorname{conv}\{E_k,F_k,G_k,H_k\}$, then $(-1)^k\langle e_1,\nabla f(x)\rangle>0$.
 \end{enumerate}
\end{lemma}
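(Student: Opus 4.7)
The plan is to prove the two items separately. Part~\ref{e2} follows from a one-dimensional convexity argument along vertical lines, while part~\ref{e1} is deeper and combines Lemma~\ref{lem:align} applied to an aligned pair of vertices with convexity of $f$ restricted to horizontal slices.

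For part~\ref{e2}, take $x=(x_1,x_2)\in\mathcal{C}$ with $x_2>0$. Since $\mathcal{C}\subset[-1,1]\times[0,2^K]$, one has $x_1\in[-1,1]$, so $(x_1,0)\in\argmin_\mathcal{C}f=[(-1,0),(1,0)]$. The function $h\colon t\mapsto f(x_1,t)$ is convex on $[0,x_2]$ with $h(0)=\min_\mathcal{C}f<f(x)=h(x_2)$, the strict inequality coming from $x\notin\argmin_\mathcal{C}f$. The subgradient inequality $h(0)\geq h(x_2)-x_2h'(x_2)$ rearranges to $h'(x_2)>0$, i.e., $\langle e_2,\nabla f(x)\rangle>0$.

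For part~\ref{e1}, I first handle even $k$; odd $k$ is analogous by reflection across the $y$-axis. The identity $A_k=(61/35)D_k$ shows that $(0,D_k,A_k)$ are aligned, and by~\ref{u1} the gradients $\nabla f(A_k)$ and $\nabla f(D_k)$ are positively colinear, so their unit directions coincide at some $u\in\mathbb{S}$. Applying Lemma~\ref{lem:align} with $\ell_1=2k+1>2k=\ell_2$ yields $\nabla f(x)=c(x)u$ with $c(x)>0$ for every $x\in[D_k,A_k]$; by~\ref{u2}, $u_1>0$. Setting $s_0=2^{-(k+1-K)}$, the segment $[D_k,A_k]$ lies on the line $y=-(9/8)s_0\xi$ for $\xi\in[-61/35,-1]$, hence meets each height $y\in[(9/8)s_0,(9/8)(61/35)s_0]$ in the single point $(\xi(y),y)$ with $\xi(y)=-(8y)/(9s_0)\leq-1$. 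At that point $\partial_{x_1}f=cu_1>0$, and the strict convexity of $\xi\mapsto f(\xi,y)$ (guaranteed by the positive definiteness of $\nabla^2f$ outside $\argmin_\mathcal{D}f$ from the last item of Theorem~\ref{th:counter}, valid here because $y>0$ places $(\xi,y)$ outside that argmin set) forces the unique minimizer $\xi^*(y)$ to satisfy $\xi^*(y)<\xi(y)\leq-1$.

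The numerical inclusions $9/8<5/4$ and $7/4<(9/8)(61/35)$ give $[(5/4)s_0,(7/4)s_0]\subset[(9/8)s_0,(9/8)(61/35)s_0]$, so for any $x=(x_1,y)\in\operatorname{conv}\{E_k,F_k,G_k,H_k\}$, one has $\xi^*(y)<-1\leq x_1$, yielding $\partial_{x_1}f(x)>0$; with $(-1)^k=1$ this closes even $k$. For odd $k$, the same reasoning applied to $[D_k,A_k]$ (now living in the right half-plane) gives $\xi^*(y)>1\geq x_1$ and hence $\partial_{x_1}f(x)<0$, which combines with $(-1)^k=-1$ to give the desired inequality. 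The main obstacle is precisely this bridge from the gradient direction on $[D_k,A_k]$ (which sits outside the strip $|x_1|\leq1$) into the rectangle: it works because the heights $5/4,7/4$ defining the rectangle are tuned so that the rectangle's vertical range is strictly contained in the vertical range swept by $[D_k,A_k]$, making the horizontal-slice convexity transfer rigorous.
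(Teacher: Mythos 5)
Your proof of part~\ref{e2} is the paper's argument in only slightly different clothing: both compare $x$ with its vertical projection $(x_1,0)\in\argmin_\mathcal{C}f=[(-1,0),(1,0)]$ via the gradient inequality and use $f(x)>\min_\mathcal{C}f$ to get strictness. Nothing to add there.

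For part~\ref{e1} you take a one-sided route where the paper takes a two-sided one, built from the same ingredients. The paper applies Lemma~\ref{lem:align} to \emph{both} aligned pairs $(A_k,D_k)$ and $(B_k,C_k)$, writes $x$ as a convex combination of points $y_1\in[A_k,D_k]$ and $y_2\in[B_k,C_k]$ at the same height, and reads off the sign of $\varphi'$ from monotonicity, knowing the sign at both endpoints. You use only the pair $(D_k,A_k)$: the sign of $\partial_{x_1}f$ at the point of $[D_k,A_k]$ at height $y$ propagates to all $x_1$ on the far side by monotonicity of the derivative of the convex horizontal restriction. This is a correct and in fact slightly more economical argument; it also makes explicit the height containment $[(5/4)s_0,(7/4)s_0]\subset[(9/8)s_0,(9/8)(61/35)s_0]$ that the paper needs but does not check (the paper needs the analogous containment for $[B_k,C_k]$ as well, which you avoid entirely). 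One blemish: the detour through the minimizer $\xi^*(y)$ of the slice $\xi\mapsto f(\xi,y)$ is both unnecessary and not quite justified, since that minimizer need not be attained on the open interval $\{\xi\mid(\xi,y)\in\mathcal{D}\}$. Replace it by the direct statement $\partial_{x_1}f(x_1,y)\geq\partial_{x_1}f(\xi(y),y)>0$ for $x_1\geq\xi(y)$, which follows from plain convexity of the restriction (the segment from $(\xi(y),y)$ to $x$ lies in $\mathcal{D}$ because $[D_k,A_k]\subset\mathcal{P}_{2k}\subset\mathcal{D}$ and $\mathcal{D}$ is convex); this also removes the appeal to positive definiteness of $\nabla^2f$, which is not needed here.
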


\begin{proof}
\begin{enumerate}[label=(\roman*)]
 \item Let $x^*
=x-\langle x,e_2\rangle e_2
\in\left[(-1,0),(1,0)\right]=\argmin_\mathcal{C}f$. By convexity of $f$, $\langle x^*-x,\nabla f(x)\rangle<0$, i.e., $\langle x,e_2\rangle\langle e_2,\nabla f(x)\rangle>0$. Since $\langle x,e_2\rangle>0$, we obtain $\langle e_2,\nabla f(x)\rangle>0$.
\item By Lemma~\ref{lem:align} and items~\ref{u1}--\ref{u2} above, $(-1)^k\langle e_1,\nabla f(y)\rangle>0$ for all $y\in\left[A_k,D_k\right]\cup\left[B_k,C_k\right]$.
There exist $y_1\in\left[A_k,D_k\right]$,  $y_2\in\left[B_k,C_k\right]$, and $\gamma'\in\left[0,1\right]$ such that 
$y_2-y_1=\|y_2-y_1\|e_1$ and $x=(1-\gamma')y_1+\gamma'y_2$.
Let
$\varphi\colon\gamma\in\left[0,1\right]\mapsto f((1-\gamma)y_1+\gamma y_2)$, which is convex. Then $\varphi'(\gamma)=\langle y_2-y_1,\nabla f((1-\gamma)y_1+\gamma y_2)\rangle=\|y_2-y_1\|\langle e_1,\nabla f((1-\gamma)y_1+\gamma y_2)\rangle$ for all $\gamma\in\left[0,1\right]$. 
Thus, $(-1)^k\varphi'(0)=(-1)^k\|y_2-y_1\|\langle e_1,\nabla f(y_1)\rangle>0$ and $(-1)^k\varphi'(1)=(-1)^k\|y_2-y_1\|\langle e_1,\nabla f(y_2)\rangle>0$, so, by monotonicity of $\varphi'$, $(-1)^k\varphi'(\gamma)>0$ for all $\gamma\in\left[0,1\right]$. Therefore, $(-1)^k\langle e_1,\nabla f(x)\rangle>0$.
\end{enumerate}
\end{proof}

Lemma~\ref{prop} completes our counterexample. 
Note that $L>L_\mathcal{C}^*\stackrel{\text{def}}{=}\sup_{x,y\in\mathcal{C},x\neq y}\|\nabla f(y)-\nabla f(x)\|/\|y-x\|$ is most likely in practice, because the optimal value $L_\mathcal{C}^*$ is very hard to estimate precisely in general. 

\begin{lemma}
\label{prop}
 Consider the Frank-Wolfe algorithm 
 starting from $x_0\in\mathcal{C}\setminus\argmin_\mathcal{C}f$ with the closed-loop strategy~\ref{step:closed}. If $L>L_\mathcal{C}^*$, then:
 \begin{enumerate}[label=(\roman*)]
\item\label{i} For all $t\in\mathbb{N}$,  $x_t\notin\argmin_\mathcal{C}f$;
\item\label{ii} For all $t\in\mathbb{N}$, $v_t\in\left[(-1,0),(1,0)\right]$ and $\gamma_t<1$;
\item\label{iii} $\gamma_t\to0$;
  \item\label{iv} For all $t,k\in\mathbb{N}$, if $x_t\in\operatorname{conv}\{E_k,F_k,G_k,H_k\}$, then $v_t=((-1)^{k+1},0)$;
  \item\label{prop:k0} There exists $k_0\in\mathbb{N}$ such that for all $k\geq k_0$, $\{x_t\mid t\in\mathbb{N}\}\cap\operatorname{conv}\{E_k,F_k,J_k,I_k\}\neq\varnothing$;
  \item\label{prop:end} $(x_t)_{t\in\mathbb{N}}$ does not converge.
 \end{enumerate}
\end{lemma}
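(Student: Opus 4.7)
The plan is to verify (i)--(vi) in order; items (i)--(iii) settle the analytic behavior of the trajectory, (iv) identifies the vertex returned by the oracle inside the outer rectangles, (v) ensures that iterates enter the small rectangles $\operatorname{conv}\{E_k,F_k,J_k,I_k\}$ for all $k$ large enough, and (vi) concludes via a Thales-type displacement estimate. The main obstacle will be (vi): the horizontal displacement across a single outer rectangle may be arbitrarily small, so one must track the full sequence and rule out a limit of the non-autonomous recursion on the $e_1$-coordinate of $x_{\tau_k}$.

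For (i) and (ii), I induct jointly on $t$. Since $\mathcal{C}=[-1,1]\times[0,2^K]$ and $\argmin_\mathcal{C}f=[(-1,0),(1,0)]$ is its bottom edge, the hypothesis $x_t\notin\argmin_\mathcal{C}f$ is equivalent to $\langle x_t,e_2\rangle>0$; Lemma~\ref{lem:efgh}\ref{e2} then forces $\langle e_2,\nabla f(x_t)\rangle>0$, so the \texttt{LMO} selects a point of minimal second coordinate, i.e., $v_t\in[(-1,0),(1,0)]\subset\bigcap_\ell\mathcal{P}_\ell=\argmin_\mathcal{D}f$. By the last item of Theorem~\ref{th:counter} (positive definiteness of $\nabla^2f$ outside the argmin), $\nabla f(v_t)=0$. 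Cauchy--Schwarz and the $L_\mathcal{C}^*$-Lipschitzness of $\nabla f$ then yield
\begin{align*}
\langle x_t-v_t,\nabla f(x_t)\rangle=\langle x_t-v_t,\nabla f(x_t)-\nabla f(v_t)\rangle\leq L_\mathcal{C}^*\|x_t-v_t\|^2<L\|x_t-v_t\|^2,
\end{align*}
hence $\gamma_t<1$, and $\langle x_{t+1},e_2\rangle=(1-\gamma_t)\langle x_t,e_2\rangle>0$, closing the induction. Item (iii) follows from the descent lemma at the closed-loop minimizer, $f(x_t)-f(x_{t+1})\geq(L/2)\|x_{t+1}-x_t\|^2$, which telescopes against Theorem~\ref{th:fw} to give $\gamma_t\|v_t-x_t\|\to 0$; combined with $\|\nabla f(x_t)\|\to 0$ (consequence of $\operatorname{dist}(x_t,\argmin_\mathcal{C}f)\to 0$ and continuity of $\nabla f$) and the refined bound $\gamma_t\leq\|\nabla f(x_t)\|/(L\|v_t-x_t\|)$ from the proof of (ii), one obtains $\gamma_t\to 0$.

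Item (iv) is immediate: by Lemma~\ref{lem:efgh}, $x_t\in\operatorname{conv}\{E_k,F_k,G_k,H_k\}$ implies $\langle e_2,\nabla f(x_t)\rangle>0$ and $(-1)^k\langle e_1,\nabla f(x_t)\rangle>0$, making $((-1)^{k+1},0)$ the unique minimizer of $\langle\cdot,\nabla f(x_t)\rangle$ over $\mathcal{C}$. For (v), the $e_2$-coordinate $\langle x_t,e_2\rangle$ is strictly decreasing with $\gamma_t\langle x_t,e_2\rangle\to 0$, so for $k$ large the one-step decrements are smaller than the vertical width $2^{K-k-1}/8$ of $\operatorname{conv}\{E_k,F_k,J_k,I_k\}$; the sequence $\langle x_t,e_2\rangle$ therefore cannot jump across the corresponding horizontal strip, and $\langle x_t,e_1\rangle\in[-1,1]$ automatically places the iterate in the rectangle.

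For (vi), fix $k\geq k_0$ and assume $k$ even (odd symmetric). Let $\tau_k$ be the first index with $x_{\tau_k}\in\operatorname{conv}\{E_k,F_k,J_k,I_k\}$ and $\sigma_k$ the first subsequent index at which the iterate exits the outer rectangle $\operatorname{conv}\{E_k,F_k,G_k,H_k\}$; by monotonicity of $\langle x_t,e_2\rangle$, exit occurs through the bottom edge at $\langle x_{\sigma_k},e_2\rangle\leq 5/4\cdot 2^{K-k-1}$. By (iv), all iterates on $[\tau_k,\sigma_k]$ lie on the segment $[x_{\tau_k},(-1,0)]$, so Thales' theorem yields
\begin{align*}
\langle x_{\sigma_k},e_1\rangle+1=\frac{\langle x_{\sigma_k},e_2\rangle}{\langle x_{\tau_k},e_2\rangle}\bigl(\langle x_{\tau_k},e_1\rangle+1\bigr)\leq\tfrac{10}{13}\bigl(\langle x_{\tau_k},e_1\rangle+1\bigr),
\end{align*}
and $1-\langle x_{\sigma_k},e_1\rangle\leq(10/13)(1-\langle x_{\tau_k},e_1\rangle)$ symmetrically for odd $k$. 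Writing $a_k=\langle x_{\tau_k},e_1\rangle\in[-1,1]$, the two bounds combine into $|a_{k+1}-(-1)^{k+2}|\geq 2-(10/13)|a_k-(-1)^{k+1}|\geq 6/13$ for $k$ large. Suppose for contradiction that $(x_t)_{t\in\mathbb{N}}$ converges, so $a_k\to a^*$; extracting joint subsequential limits $r^*_{\mathrm{even}},r^*_{\mathrm{odd}}\in[0,10/13]$ of the Thales ratios along even and odd indices, the limiting recursion forces both $|a^*-1|=2-r^*_{\mathrm{even}}|a^*+1|$ and $|a^*+1|=2-r^*_{\mathrm{odd}}|a^*-1|$, which are jointly satisfiable only for $r^*_{\mathrm{even}}=r^*_{\mathrm{odd}}=1$, contradicting $r^*_\bullet\leq 10/13$. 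Hence $(x_t)_{t\in\mathbb{N}}$ does not converge.
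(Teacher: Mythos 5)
Your items (i), (ii), (iv) and (v) essentially follow the paper's route, but there is a genuine gap in (iii), and it propagates, since your proof of (v) relies on $\gamma_t\to0$. The three facts you invoke --- $\gamma_t\|v_t-x_t\|\to0$ from the descent lemma, $\|\nabla f(x_t)\|\to0$, and the bound $\gamma_t\leq\|\nabla f(x_t)\|/(L\|v_t-x_t\|)$ --- do not together imply $\gamma_t\to0$: take $\gamma_t=1/2$, $\|v_t-x_t\|=\epsilon_t\to0$ and $\|\nabla f(x_t)\|=L\epsilon_t/2$; all three facts hold while $\gamma_t\not\to0$. The dangerous regime is exactly the one that occurs here, where $x_t$ approaches the segment containing $v_t$, so the numerator and denominator of your bound can vanish at the same rate. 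What is actually needed is that the difference quotient $\|\nabla f(x_t)-\nabla f(v_t)\|/\|x_t-v_t\|$ itself tends to zero, and this is where the second-order structure of the construction enters: by the mean value theorem this quotient is at most $\sup_{x\in[x_t,v_t]}\|\nabla^2f(x)\|_{\operatorname{op}}$; since $\argmin_\mathcal{D}f=\bigcap_{\ell}\mathcal{P}_\ell$ has nonempty interior, $\nabla^2f$ vanishes there, and as both $x_t$ and $v_t$ converge to this set and $\nabla^2f$ is continuous ($d\geq2$), the supremum tends to $0$. This is the paper's argument for (iii), and it cannot be replaced by the purely first-order facts you list.

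In (vi) your Thales estimate inside a single outer rectangle is correct and is the paper's computation, but the recursion $|a_{k+1}-(-1)^{k+2}|\geq2-(10/13)\,|a_k-(-1)^{k+1}|$ is not justified: between the exit time $\sigma_k$ and the entry time $\tau_{k+1}$ the iterate crosses the horizontal band between the heights of $H_k$ and $E_{k+1}$, where Lemma~\ref{lem:efgh}\ref{e1} gives no control on the sign of $\langle e_1,\nabla f\rangle$, so the oracle may return either endpoint of $\left[(-1,0),(1,0)\right]$ there and $a_{k+1}$ can drift back toward $((-1)^{k+2},0)$ relative to $\langle x_{\sigma_k},e_1\rangle$. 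The contradiction is still reachable without that recursion: under the convergence hypothesis $\langle x_{\tau_k},e_1\rangle$ and $\langle x_{\sigma_k},e_1\rangle$ share the same limit, and your even/odd Thales bounds then force that limit to equal both $-1$ and $1$; alternatively, the paper splits on whether $\alpha_{\tau_k}\geq-1/2$ to extract a uniform horizontal displacement of $3/26$ infinitely often, violating the Cauchy property. So (vi) is repairable with the ingredients you already have, whereas (iii) is the step that genuinely fails as written.
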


\begin{proof}
 Let $x_t=(\alpha_t,\beta_t)$ for all $t\in\mathbb{N}$ and recall that $\argmin_\mathcal{C}f=\left[(-1,0),(1,0)\right]$. 
 \begin{enumerate}[label=(\roman*)]
  \item We proceed by induction. The base case is satisfied. Let $t\in\mathbb{N}$ be such that $x_t\notin\argmin_\mathcal{C}f$. Then $\beta_t>0$ and, by Lemma~\ref{lem:efgh}\ref{e2}, $v_t\in\left[(-1,0),(1,0)\right]$, so $\nabla f(v_t)=0$. By the Cauchy-Schwarz inequality,
\begin{align}
  \gamma_t
  \leq\frac{\langle x_t-v_t,\nabla f(x_t)\rangle}{L\|x_t-v_t\|^2}
  \leq\frac{\|\nabla f(x_t)\|}{L\|x_t-v_t\|}
  =\frac{\|\nabla f(x_t)-\nabla f(v_t)\|}{L\|x_t-v_t\|}
  \leq\frac{L_\mathcal{C}^*}{L}
  <1.\label{g1}
\end{align}
Therefore, $x_{t+1}\in\left[x_t,v_t\right[\subset\mathcal{C}\setminus\argmin_\mathcal{C}f$.
\item See the proof of Lemma~\ref{prop}\ref{i}.
\item By Theorem~\ref{th:counter}, $f$ is $d$-time continuously differentiable, where $d\geq2$. By~\eqref{g1} and the mean value theorem, 
\begin{align}
  \gamma_t
  \leq\frac{\|\nabla f(x_t)-\nabla f(v_t)\|}{L\|x_t-v_t\|}
  \leq\frac{1}{L}\sup_{x\in\left[x_t,v_t\right]}\|\nabla^2f(x)\|_{\operatorname{op}},\label{op}
\end{align}
where $\|\cdot\|_{\operatorname{op}}$ denotes the operator norm. Since $\argmin_\mathcal{D}f=\bigcap_{k\in\mathbb{N}}\mathcal{P}_k$ has nonempty interior, $\nabla^2f(x)=0$ for all $x\in\argmin_\mathcal{D}f\supset\argmin_\mathcal{C}f$. 
By Theorem~\ref{th:fw}, $f(x_t)\to\min_\mathcal{C}f$, so $\operatorname{dist}(x_t,\argmin_\mathcal{C}f)\to0$, and, by Lemma~\ref{prop}\ref{ii}, $v_t\in\argmin_\mathcal{C}f$ for all $t\in\mathbb{N}$.
By continuity of $\nabla^2f$, it follows that $\sup_{x\in\left[x_t,v_t\right]}\|\nabla^2f(x)\|_{\operatorname{op}}\to0$. 
By~\eqref{op}, $\gamma_t\to0$. 
  \item This follows from Lemma~\ref{lem:efgh}\ref{e1}.
  \item By Lemma~\ref{prop}\ref{ii}, $v_t\in\left[(-1,0),(1,0)\right]$ for all $t\in\mathbb{N}$. Thus,
 \begin{align}
  \beta_{t'}
  =\left(\prod_{\ell=t}^{t'-1}(1-\gamma_\ell)\right)\beta_t,\label{zz}
 \end{align}
 for all $t,t'\in\mathbb{N}$ such that $t'\geq t+1$. Let $c=1-\sqrt{13/14}>0$. By Theorem~\ref{th:fw}, $f(x_t)\to\min_\mathcal{C}f$, so $\beta_t\to0$. By Lemma~\ref{prop}\ref{iii}, $\gamma_t\to0$, so there exists $t_0\in\mathbb{N}$ such that $\beta_t\leq\beta_{t_0}$ and $\gamma_t\leq c$ for all $t\geq t_0$. Let $k_0\in\mathbb{N}$ be such that $\beta_{t_0}\geq(7/4)(1/2)^{k_0+1-K}=\langle E_{k_0},e_2\rangle$. Let $k\geq k_0$, $t\geq t_0$, and $t'\geq t+1$ be such that $\beta_t\geq(7/4)(1/2)^{k+1-K}=\langle E_k,e_2\rangle$ and $\beta_{t'}\leq(13/8)(1/2)^{k+1-K}=\langle I_k,e_2\rangle$. By~\eqref{zz},
 \begin{align*}
  \frac{13/8}{7/4}
  \geq\prod_{\ell=t}^{t'-1}(1-\gamma_\ell)
  \geq(1-c)^{t'-t},
 \end{align*}
 so
 \begin{align*}
  t'-t
  \geq\frac{\ln(14/13)}{\ln(1/(1-c))}
  =2.
 \end{align*}
 Therefore, there exists $t''\in\llbracket t+1,t'-1\rrbracket$ such that $x_{t''}\in\operatorname{conv}\{E_k,F_k,J_k,I_k\}$.
 
 \item Let $t,k\in\mathbb{N}$ be such that $x_t\in\operatorname{conv}\{E_k,F_k,J_k,I_k\}$. With no loss of generality, we can assume that $k$ is even.
 \begin{itemize}
  \item If $\alpha_t\geq-1/2$, let $t'=\min\{\ell\in\mathbb{N}\mid\beta_\ell<(5/4)(1/2)^{k+1-K}=\langle H_k,e_2\rangle,\ell\geq t\}$. By Lemma~\ref{prop}\ref{iv}, $v_\ell=(-1,0)$ for every $\ell\in\llbracket t,t'-1\rrbracket$. By Thales' theorem,
 \begin{align*}
  \frac{\alpha_t-\alpha_{t'}}{\alpha_t-(-1)}
  =\frac{\beta_t-\beta_{t'}}{\beta_t-0},
 \end{align*}
 where $\alpha_t\geq-1/2$, $\beta_t\geq(13/8)(1/2)^{k+1-K}=\langle I_k,e_2\rangle$, and $\beta_{t'}\leq(5/4)(1/2)^{k+1-K}=\langle H_k,e_2\rangle$. Thus,
 \begin{align*}
  \alpha_t-\alpha_{t'}
  \geq\frac{13/8-5/4}{13/8}\frac{1}{2}
  =\frac{3}{26}.
 \end{align*}
 \item Else, $\alpha_t<-1/2$. There exists $t'\in\mathbb{N}$ such that $x_{t'}\in\operatorname{conv}\{E_{k+1},F_{k+1},J_{k+1},I_{k+1}\}$. Let $t''=\min\{\ell\in\mathbb{N}\mid\beta_\ell<(5/4)(1/2)^{k+2-K}=\langle H_{k+1},e_2\rangle,\ell\geq t'\}$. If $\alpha_{t'}\leq1/2$, we can reproduce the same argument as above and conclude that
 \begin{align*}
  \alpha_{t''}-\alpha_{t'}
  \geq\frac{3}{26}.
 \end{align*}
 Else, $\alpha_{t'}\geq1/2$, so 
 \begin{align*}
  \alpha_{t'}-\alpha_t
  \geq1
  \geq\frac{3}{26}.
 \end{align*}
 \end{itemize}
 Thus, together with Lemma~\ref{prop}\ref{prop:k0}, we conclude that $\operatorname{card}\{t\in\mathbb{N}\mid\exists t'\geq t\colon|\alpha_{t'}-\alpha_t|\geq3/26\}=+\infty$. Therefore, $(\alpha_t)_{t\in\mathbb{N}}$ is not a Cauchy sequence, so $(x_t)_{t\in\mathbb{N}}$ does not converge.
 \end{enumerate}
\end{proof}

\subsection{Counterexample 4: Open-loop strategies}

Our last counterexample involves the open-loop strategies~\ref{step:open1}--\ref{step:open2}. It is similar in design to the one in Section~\ref{sec:ls}. Note that the counterexample from Section~\ref{sec:notls} would be valid here as well if we had $\gamma_0<1$.

\begin{ctr}
 There exist $\mathcal{C}$ and $f$, satisfying Assumption~\ref{aspt}, and $x_0\in\mathcal{C}$ such that any sequence $(x_t)_{t\in\mathbb{N}}$ generated by Frank-Wolfe algorithm using an open-loop strategy~\ref{step:open1}--\ref{step:open2} does not converge.
\end{ctr}

Let $(\gamma_t)_{t\in\mathbb{N}}$ be an open-loop strategy, $\mathcal{C}=\operatorname{conv}\{(-2,1/4),(-1,0),(0,1),(1,0)\}$, and $f$ be defined via Theorem~\ref{th:counter} with, for all $k\in\mathbb{N}$, 
$\mathcal{P}_k=\operatorname{conv}\{A_k,B_k,C_k,-B_k\}$ where
\begin{align*}
 A_k=\left(-2-\displaystyle\frac{1}{k+1},0\right),
 \quad
 C_k=\left(1+\displaystyle\frac{1}{k+1},0\right),
\end{align*}
$B_0=(0,1)$, $B_1=V_0=(-2,1/4)$, $V_1=(1,0)$, and, for all $k\geq1$, 
\begin{align*}
 &B_{k+1}=(1-\gamma_k)B_k+\gamma_kV_k,\\
 &V_{k+1}=
 \begin{cases}
  -V_k&\text{if }|\langle B_{k+1},e_1\rangle|>1/4\text{ and }|\langle B_k,e_1\rangle|\leq1/4;\\
  V_k&\text{else}.
 \end{cases}
\end{align*}
Then $\mathcal{C}$ and $f$ satisfy Assumption~\ref{aspt} so Theorem~\ref{th:fw} holds. The solution set is $\argmin_\mathcal{C}f=\bigcap_{k\in\mathbb{N}}\mathcal{P}_k\cap\mathcal{C}=\left[(-1,0),(1,0)\right]$. The construction is such that the trajectory of $(B_k)_{k\in\mathbb{N}}$ is headed by $(V_k)_{k\in\mathbb{N}}$, whose terms change value each time $(B_k)_{k\in\mathbb{N}}$ crosses the vertical band $\{x\in\mathcal{C}\mid|\langle x,e_1\rangle|\leq1/4\}$. An illustration is presented in Figure~\ref{fig:open}.

\begin{figure}[!h]
 \centering 
 \begin{tikzpicture}[thick, xscale=2, yscale=12]
  
  \coordinate (A0) at (-3, 0);
  \coordinate (B0) at (0, 1);
  \coordinate (C0) at (2, 0);
  
  \coordinate (A1) at (-2.5, 0);
  \coordinate (B1) at (-2, 1/4);
  \coordinate (C1) at (1.5, 0);
  
  \coordinate (A2) at (-2.333, 0);
  \coordinate (B2) at (0, 0.0833);
  \coordinate (C2) at (1.333, 0);
  
  \coordinate (A3) at (-2.25, 0);
  \coordinate (B3) at (0.5, 0.0417);
  \coordinate (C3) at (1.25, 0);
  
  \coordinate (A4) at (-2.2, 0);
  \coordinate (B4) at (-0.1, 0.025);
  \coordinate (C4) at (1.2, 0);
  
  \coordinate (A5) at (-2.1667, 0);
  \coordinate (B5) at (-0.4, 0.0167);
  \coordinate (C5) at (1.1667, 0);
  
  \coordinate (A6) at (-2.149, 0);
  \coordinate (B6) at (0, 0.0119);
  \coordinate (C6) at (1.1429, 0);
  
  \coordinate (A7) at (-2.125, 0);
  \coordinate (B7) at (0.25, 0.0089);
  \coordinate (C7) at (1.125, 0);
  
  \coordinate (A8) at (-2.111, 0);
  \coordinate (B8) at (0.4167, 0.0069);
  \coordinate (C8) at (1.111, 0);
  
  \coordinate (A9) at (-2.1, 0);
  \coordinate (B9) at (0.1333, 0.0056);
  \coordinate (C9) at (1.1, 0);
  
  \draw[dotted] (-1/4, 0) -- (-1/4, 1);
  \draw[dotted] (1/4, 0) -- (1/4, 1);
  
  \draw[orange!50] (A0) -- (B0) -- (C0);
  \draw[orange!50] (A1) -- (B1) -- (C1);
  \draw[orange!50] (A2) -- (B2) -- (C2);
  \draw[orange!50] (A3) -- (B3) -- (C3);
  \draw[orange!50] (A4) -- (B4) -- (C4);
  \draw[orange!50] (A5) -- (B5) -- (C5);
  \draw[orange!50] (A6) -- (B6) -- (C6);
  \draw[orange!50] (A7) -- (B7) -- (C7);
  \draw[orange!50] (A8) -- (B8) -- (C8);
  \draw[orange!50] (A9) -- (B9) -- (C9);

  \draw[->][ForestGreen] (B1) -- (-2.03, 0.287);
  \draw[->][ForestGreen] (B2) -- (-0.03, 0.1203);
  \draw[->][ForestGreen] (B3) -- (0.47, 0.0787);
  \draw[->][ForestGreen] (B4) -- (-0.08, 0.063);
  \draw[->][ForestGreen] (B5) -- (-0.38, 0.0547);
  
  \draw (-2, 1/4) -- (-1, 0);
  \draw (1, 0) -- (0, 1);
\draw[crimson] (-1, 0) -- (1, 0);
  
  \draw[->][blue] (B0) -- (B1);
  \draw[->][blue] (B1) -- (B2);
  \draw[->][blue] (B2) -- (B3);
  \draw[->][blue] (B3) -- (B4);
  \draw[->][blue] (B4) -- (B5);
  \draw[->][blue] (B5) -- (B6);
  \draw[->][blue] (B6) -- (B7);
  \draw[->][blue] (B7) -- (B8);
  \draw[->][blue] (B8) -- (B9);
  
\node[fill=black,circle,label=below:
  $A_0$,inner sep=0.9pt] at (A0) {};
\node[fill=black,circle,label=below:
  $A_1\;\;$,inner sep=0.9pt] at (A1) {};
\node[fill=black,circle,label=below:
  $\;\;A_2$,inner sep=0.9pt] at (A2) {};

\node[fill=black,circle,label=below:
  $C_0$,inner sep=0.9pt] at (C0) {};
\node[fill=black,circle,label=above:
  $\;\;C_1$,inner sep=0.9pt] at (C1) {};
\node[fill=black,circle,label=above:
  $C_2\;\;$,inner sep=0.9pt] at (C2) {};
  
  \node[fill=black,circle,label=
  $B_0$,inner sep=0.9pt] at (B0) {};
  \node[fill=black,circle,label=left:{$B_1=V_0$},inner sep=0.9pt] at (B1) {};
  \node[fill=black,circle,label=above:$\quad\;B_2$,inner sep=0.9pt] at (B2) {};
  \node[fill=black,circle,label=right:$B_3$,inner sep=0.9pt] at (B3) {};
  \node[fill=black,circle,label=above:$B_4\;$,inner sep=0.9pt] at (B4) {};
  \node[fill=black,circle,label=left:$B_5$,inner sep=0.9pt] at (B5) {};
  \node[fill=black,circle,inner sep=0.9pt] at (B6) {};
  \node[fill=black,circle,inner sep=0.9pt] at (B7) {};
  \node[fill=black,circle,inner sep=0.9pt] at (B8) {};
  \node[fill=black,circle,inner sep=0.9pt] at (B9) {};
  
  \node[fill=black,circle,
  inner sep=0.9pt] at (1,0) {};
  \node[below] at (1,0) {\begin{tabular}{c} $V_1=V_2$\\$=V_5$\end{tabular}$\quad$};
  \node[fill=black,circle,label=below:{$V_3=V_4$},inner sep=0.9pt] at (-1,0) {};
  
 \end{tikzpicture}
 \caption{\emph{Stretched for visualization purposes.} The constraint set (in black), polygonal sketches of the objective function (in orange) built using the open-loop strategy~\ref{step:open2}, the solution set (in red), gradient directions (in green), 
 and the trajectory (in blue) of 
 $(B_k)_{k\in\mathbb{N}}$. 
 For all $k\geq1$, $B_{k+1}\in\left[B_k,V_k\right]$ where $V_k=V_{k-1}$ or $V_k=-V_{k-1}$, depending on whether $(B_k)_{k\in\mathbb{N}}$ has just crossed the vertical band delimited by the dashed lines or not.}
 \label{fig:open}
\end{figure}
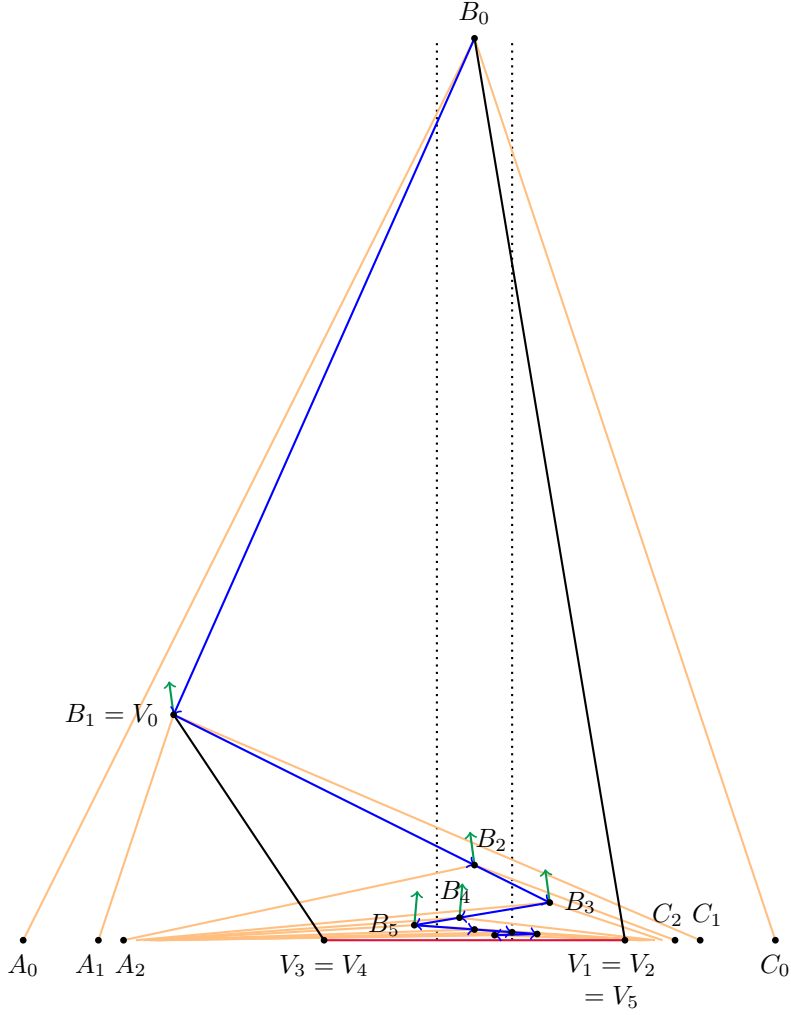

By construction, for all $k\in\mathbb{N}$, 
\begin{align*}
 \mathcal{K}_{\mathcal{P}_k}B_k\cap(\mathbb{R}_-^*\times\mathbb{R}_+^*)\neq\varnothing
 \quad\text{and}\quad
 \mathcal{K}_{\mathcal{P}_k}B_k\cap(\mathbb{R}_+^*)^2\neq\varnothing.
\end{align*}
Thus, we can choose $(u_k)_{k\in\mathbb{N}}$ such that for all $k\in\mathbb{N}$, 
\begin{align*}
 u_k\in\mathcal{K}_{\mathcal{P}_k}B_k
 \quad\text{and}\quad
 V_k\in\argmin_{v\in\mathcal{C}}\,\langle v,u_k\rangle.
\end{align*}
Let $x_0=B_0$. By induction, $x_t=B_t$ for all $t\in\mathbb{N}$. Therefore, $(x_t)_{t\in\mathbb{N}}$ crosses the vertical band $\{x\in\mathcal{C}\mid|\langle x,e_1\rangle|\leq1/4\}$ indefinitely, i.e., $\operatorname{card}\{t\in\mathbb{N}\mid\langle x_t,e_1\rangle\leq-1/4\}=+\infty$ and $\operatorname{card}\{t\in\mathbb{N}\mid\langle x_t,e_1\rangle\geq1/4\}=+\infty$. Since $\gamma_t<1$ for all $t\geq1$, $\langle x_t,e_2\rangle>0$ for all $t\in\mathbb{N}$, so $(x_t)_{t\in\mathbb{N}}$ does not converge.

\subsection*{Acknowledgment}

The authors acknowledge the support of the AI Interdisciplinary Institute ANITI funding, through the French ``Investments for the Future -- PIA3'' program under the grant agreement ANR-19-PI3A0004, Air Force Office of Scientific Research, Air Force Material Command, USAF, under grant numbers FA9550-19-1-7026, and ANR MaSDOL 19-CE23-0017-0. J\'er\^ome Bolte also acknowledges the support of ANR Chess, grant ANR-17-EURE-0010, and TSE-P.

\bibliographystyle{abbrv}
{\small\bibliography{biblio}}

\end{document}